\documentclass[11pt]{article}
\usepackage{color}
\usepackage[margin=1in]{geometry}
\usepackage{amssymb}
\usepackage{amsmath}
\usepackage{amsthm}
\usepackage{mathtools}
\usepackage{graphicx}
\usepackage[normalem]{ulem}
\usepackage{algorithm}
\usepackage{algorithmicx}
\usepackage{algpseudocode}
\usepackage[colorlinks=true]{hyperref}

\newtheorem{theorem}{Theorem}[section]
\newtheorem{lemma}{Lemma}[section]

\newtheorem{corollary}{Corollary}[section]
\newtheorem{remark}{Remark}[section]
\numberwithin{equation}{section}

\allowdisplaybreaks 
\begin{document}
\title{A Tensor Neural Network Method for High-Order Homogenization of Locally Periodic Elliptic Problems\footnote{This work was supported  by the National Key Laboratory of 
Computational Physics of China (6142A05230501), the National Center for 
Mathematics and Interdisciplinary Science, CAS.}}
\author{Huaijia Zhang\footnote{Beijing Key Laboratory on MCAACI, Beijing Institute of Technology, Beijing 100081, China,  and School of Mathematics and Statistics, Beijing Institute of Technology, Beijing 100081, China (3120225733@bit.edu.cn).}, \ \
Haochen Liu\footnote{Corresponding author. LSEC, NCMIS, Institute
of Computational Mathematics, Academy of Mathematics and Systems
Science, Chinese Academy of Sciences, Beijing 100190,
China,  and School of Mathematical Sciences, University
of Chinese Academy of Sciences, Beijing 100049, China (liuhaochen@lsec.cc.ac.cn).}, \ \
Xia Ji\footnote{Beijing Key Laboratory on MCAACI, Beijing Institute of Technology, Beijing 100081, China,  and School of Mathematics and Statistics, Beijing Institute of Technology, Beijing 100081, China (jixia@bit.edu.cn).}
\ \  and \ \
Hehu Xie\footnote{LSEC, NCMIS, Institute
of Computational Mathematics, Academy of Mathematics and Systems
Science, Chinese Academy of Sciences, Beijing 100190,
China,  and School of Mathematical Sciences, University
of Chinese Academy of Sciences, Beijing 100049, China (hhxie@lsec.cc.ac.cn).}}

\date{}
\maketitle

\begin{abstract}
We develop a high-order tensor-neural-network method for locally periodic elliptic multiscale problems of the form
\[
-\nabla\cdot\left(A\left(x,\frac{x}{\varepsilon}\right)\nabla u_\varepsilon\right)=f,
\]
where the coefficient depends both on the slow variable $x$ and on the fast periodic variable $y=x/\varepsilon$. Compared with the classical periodic case $A=A(y)$, the locally periodic setting leads to a substantially more involved hierarchy of high-order cell problems and macroscopic corrector equations, since the correctors depend parametrically on the slow variable. We first derive a computable high-order two-scale expansion for $A(x,y)$ and prove an $H^1$ convergence estimate for the partial expansion in boundary-layer-free settings, such as periodic domains or ideal boundary-matching configurations. The proof is based on the recursive compatibility structure of the high-order corrector system and a zero-mean oscillation estimate in $H^{-1}$. 

On the numerical side, we propose a tensor neural network (TNN) framework for solving the high-dimensional corrector hierarchy. The tensor product structure allows the high-dimensional integrals arising in cell problems, homogenized coefficients, macroscopic source terms, and loss functions to be evaluated by deterministic one-dimensional quadrature rules, avoiding Monte Carlo integration errors. The numerical realization assumes that the entries of the coefficient and the assembled data are available in a finite or controlled tensor-product representation; this computational assumption is separate from the general matrix-valued coefficient class used in the analysis. This feature is particularly important for high-order homogenization, where the accumulated numerical error in successive corrector equations may otherwise destroy the expected convergence rate. Numerical experiments for scalar locally periodic coefficients demonstrate that the proposed method accurately computes the high-order correctors. The $H^1$ semi-norm results are consistent with the proved boundary-layer-free estimate, while the point-normalized $L^2$ errors exhibit the nominal high-order behavior predicted by the formal expansion.

\noindent\textbf{Keywords.}
Locally periodic homogenization; high-order asymptotic expansion; elliptic multiscale problems; tensor neural network; high-dimensional integration; cell problems.

\noindent\textbf{AMS subject classifications.}
35B27, 35J15, 65N30, 65N99, 68T07.

\end{abstract}

\section{Introduction}
\label{sec:introduction}

Multiscale elliptic equations with rapidly oscillating coefficients arise in many applications, including composite materials, porous media, diffusion in heterogeneous environments, and multiscale continuum mechanics. A prototypical model is
\begin{equation}
-\nabla\cdot\left(A^\varepsilon(x)\nabla u_\varepsilon(x)\right)=f(x),
\qquad
A^\varepsilon(x)=A\left(x,\frac{x}{\varepsilon}\right),
\label{eq:intro_model}
\end{equation}
where $0<\varepsilon\ll1$ denotes the microscopic length scale. When the coefficient is purely periodic, namely $A^\varepsilon(x)=A(x/\varepsilon)$, the mathematical theory of periodic homogenization is classical; see, for example, \cite{BensoussanLionsPapanicolaou, SanchezPalencia, BakhvalovPanasenko, JikovKozlovOleinik, CioranescuDonato, TartarBook, ShenBook}. In this setting, the oscillatory equation is approximated by a homogenized equation with an effective coefficient, and the leading-order corrector is obtained from a family of periodic cell problems. Quantitative convergence rates and corrector estimates have been extensively studied by variational methods, compactness methods, and operator-theoretic approaches; see, among many others, \cite{AvellanedaLin, KenigLinShen, ShenBoundary, SuslinaDirichlet, NiuShenXu}.

The present work is concerned with the more general locally periodic case
\[
A^\varepsilon(x)=A\left(x,\frac{x}{\varepsilon}\right),
\]
where the coefficient is periodic only with respect to the fast variable and depends nontrivially on the macroscopic variable. Such coefficients are natural in applications where the microscopic structure varies slowly in space. From the theoretical viewpoint, this setting is more delicate than the purely periodic case because the cell functions, homogenized coefficients, and high-order correctors all become parameter-dependent functions of the slow variable. Locally periodic elliptic operators have been studied in the framework of qualitative homogenization and quantitative operator estimates; see, for example, \cite{PastukhovaTikhomirov, SenikJMAA, SenikSIAM}. These works provide fundamental convergence and corrector estimates for locally periodic operators, including boundary value problems. However, the construction of a computable high-order asymptotic hierarchy for $A(x,y)$ and its numerical realization in high dimensions remain challenging.

High-order asymptotic expansions are useful when one seeks accuracy beyond the leading homogenized solution. In the periodic case, the formal expansion generates a cascade of cell problems and macroscopic correction equations. Boundary layers play an essential role in bounded domains with Dirichlet boundary conditions. In particular, the standard two-scale expansion generally does not satisfy the boundary condition and must be supplemented by boundary layer correctors; see \cite{BensoussanLionsPapanicolaou, AllaireAmar, MoskowVogelius}. Boundary layer tails may influence higher-order interior error estimates and may lead to effective boundary conditions. In this paper, we deliberately isolate a different issue: the high-order algebraic structure and numerical computation of the locally periodic corrector hierarchy. Therefore, our convergence theorem is formulated in boundary-layer-free settings, such as periodic domains or ideal boundary-matching configurations. The treatment of boundary layer tails for Dirichlet problems is left to future work.

Numerically, solving multiscale problems by direct discretization is prohibitively expensive when $\varepsilon$ is small, since the mesh size must resolve the microscopic oscillations. Classical multiscale methods, including multiscale finite element methods and heterogeneous multiscale methods, reduce this cost by incorporating local cell information into macroscopic discretizations and are highly effective in low and moderate spatial dimensions; see \cite{HouWu, EWeinanEngquist, Abdulle}. In the locally periodic high-order setting considered here, however, the auxiliary correctors depend jointly on the slow and fast variables and must be computed recursively. A conventional grid-based treatment therefore requires either repeated cell solves over many macroscopic samples or a direct discretization on $\Omega\times Y$, both of which become increasingly expensive as the dimension and expansion order grow.

In recent years, machine learning methods have been increasingly used for multiscale PDEs and homogenization. Neural-network-based solvers have been designed for multiscale elliptic equations \cite{LiXuZhangMscaleDNN}, neural homogenization-based physics-informed neural networks have been proposed to improve PINN accuracy for multiscale problems \cite{LeungLinZhangNHPINN}, and learning-based approaches have been developed for homogenized constitutive laws and cell-problem solution operators \cite{BhattacharyaKovachkiRajanStuartTrautner}. Physics-informed neural networks have also been applied to first-order two-scale asymptotic homogenization \cite{SoyarslanPradas}. These works demonstrate the potential of machine learning in multiscale modeling. However, most existing learning-based homogenization methods either focus on leading-order or first-order homogenization, or rely on stochastic sampling of high-dimensional loss functions. For high-order locally periodic expansions, the successive cell problems and macroscopic correction equations involve many high-dimensional integrals whose accurate evaluation is crucial.

Tensor neural networks provide a natural way to address this difficulty. The tensor product structure enables high-dimensional integrals of TNN functions to be reduced to combinations of one-dimensional quadratures, leading to deterministic and highly accurate integration without Monte Carlo sampling \cite{WangJinXieTNN, WangLinLiaoLiuXie, LinLiuXieTNNMultiscale}. This feature is especially attractive for high-order homogenization, because each corrector depends on integrals of previously computed correctors, and numerical quadrature errors can propagate through the hierarchy. The present paper combines the high-order locally periodic asymptotic expansion with a TNN-based solver, using the tensor structure to compute the loss functions and homogenized quantities accurately. The analytical results allow general smooth uniformly elliptic matrix coefficients, whereas the tensorized numerical contractions require each coefficient entry and assembled right-hand side to possess a finite or accurately truncated tensor-product representation. Arbitrary data may first be tensorized, but the additional approximation error of that preprocessing step is not analyzed here.

The main contributions of this work are as follows.

\begin{itemize}
    \item We derive a recursive high-order asymptotic expansion for locally periodic elliptic equations with coefficients $A(x,x/\varepsilon)$. The hierarchy consists of parameterized cell problems, macroscopic corrector equations, and oscillatory correction terms. The formulation is designed to be directly computable.

    \item We prove a high-order $H^1$ convergence estimate for the partial expansion in boundary-layer-free settings. The proof relies on the compatibility of the recursive cell problems, a precise residual identity, and a zero-mean oscillation estimate that yields an $\varepsilon$-order gain in the $H^{-1}$ norm.

    \item We design a tensor-neural-network method for solving the high-dimensional corrector hierarchy with tensor-structured coefficient entries and data. The method uses deterministic tensorized quadrature to evaluate the high-dimensional integrals appearing in the variational losses, cell averages, homogenized coefficients, and high-order source terms.

    \item We provide numerical experiments for locally periodic coefficients to verify the accuracy of the computed correctors. The $H^1$ semi-norm tests are compared with the proved boundary-layer-free estimate, whereas the point-normalized $L^2$ results are reported as empirical convergence observations consistent with the formal expansion orders. The tests also illustrate the advantage of high-accuracy integration in preserving the asymptotic hierarchy.
\end{itemize}

The rest of this paper is organized as follows. In Section~\ref{Section_Expansion}, we introduce the locally periodic elliptic model and derive the high-order corrector hierarchy and then prove the boundary-layer-free convergence theorem. Section~\ref{Section_Method} presents the tensor-neural-network discretization and the high-accuracy quadrature strategy. Section~\ref{Section_Numerical} reports numerical experiments and convergence tests. Finally, Section~\ref{Section_Conclusion} concludes the paper and discusses future extensions to Dirichlet boundary layer corrections.

\section{High-Order Two-Scale Expansion}
\label{Section_Expansion}

\subsection{Locally periodic model and recursive correctors}
\label{subsec:problem_setting_standard_expansion}

Let $\Omega$ be either a bounded domain in $\mathbb R^n$ or the flat torus
$\mathbb T^n$, and let $Y=(0,1)^n$ be the reference periodic cell. For
$\varepsilon>0$, we consider
\begin{equation}
-\nabla\cdot\left(A\left(x,\frac{x}{\varepsilon}\right)\nabla u_\varepsilon(x)\right)
=
f(x)
\qquad \text{in }\Omega.
\label{eq:eps_problem_main}
\end{equation}
The principal boundary-layer-free setting of the convergence analysis is the
flat torus $\Omega=\mathbb T^n$. In this setting, $A$ is periodic in both $x$
and $y$, $f$ is periodic with zero mean, and $u_\varepsilon$ is the zero-mean
periodic solution. When the problem is represented on the unit cube, we use
compatible scales $\varepsilon=1/N$, $N\in\mathbb N$, so that
$A(x,x/\varepsilon)$ is well defined across the identified faces. We also
allow the exceptional Dirichlet configurations in which every truncated
expansion satisfies the boundary data exactly; the one-dimensional mechanism
for such exact matching is described below.
The coefficient matrix $A(x,y)$ is assumed to be $Y$-periodic in $y$, symmetric, and uniformly elliptic: there exist constants
$0<\lambda\le \Lambda<\infty$ such that
\begin{equation}
\lambda |\xi|^2
\le
\xi^T A(x,y)\xi
\le
\Lambda |\xi|^2,
\qquad
\forall \xi\in\mathbb R^n,\quad (x,y)\in \Omega\times Y.
\label{eq:uniform_ellipticity}
\end{equation}
This subsection derives the recursive corrector hierarchy used later by the numerical method. Only moderate regularity is needed for the formal construction; the stronger assumptions required for the convergence theorem are collected in Subsection~\ref{subsec:high_order_convergence_ideal}. Boundary conditions do not enter the algebraic derivation itself. They enter the convergence argument through the error space: the periodic setting produces no physical boundary layer, whereas a standard Dirichlet problem generally requires boundary layer correctors. Apart from the special exact-matching configurations stated explicitly below, such Dirichlet boundary layers are outside the scope of this paper.

We introduce the two-scale differential operator
\[
D_\varepsilon=\nabla_x+\frac1\varepsilon\nabla_y.
\]
For a smooth two-scale function $v(x,y)$, one has
\[
-\nabla\cdot\left(
A\left(x,\frac{x}{\varepsilon}\right)
\nabla v\left(x,\frac{x}{\varepsilon}\right)
\right)
=
\left[
\varepsilon^{-2}L_0v
+
\varepsilon^{-1}L_1v
+
L_2v
\right]_{y=x/\varepsilon},
\]
where
\begin{align}
L_0 v
&=
-\nabla_y\cdot\left(A(x,y)\nabla_y v\right),
\label{eq:L0_def}
\\
L_1 v
&=
-\nabla_y\cdot\left(A(x,y)\nabla_x v\right)
-\nabla_x\cdot\left(A(x,y)\nabla_y v\right),
\label{eq:L1_def}
\\
L_2 v
&=
-\nabla_x\cdot\left(A(x,y)\nabla_x v\right).
\label{eq:L2_def}
\end{align}

We seek an expansion of the form
\begin{equation}
u_\varepsilon(x)
\sim
u_0(x)
+
\varepsilon u_1\left(x,\frac{x}{\varepsilon}\right)
+
\varepsilon^2 u_2\left(x,\frac{x}{\varepsilon}\right)
+\cdots,
\label{eq:formal_ansatz}
\end{equation}
where each corrector is $Y$-periodic in the fast variable $y$. Substitution of \eqref{eq:formal_ansatz} into
\eqref{eq:eps_problem_main} gives the cascade
\begin{align}
L_0u_0&=0,
\label{eq:cascade_0}
\\
L_0u_1+L_1u_0&=0,
\label{eq:cascade_1}
\\
L_0u_2+L_1u_1+L_2u_0&=f,
\label{eq:cascade_2}
\\
L_0u_{j+2}+L_1u_{j+1}+L_2u_j&=0,
\qquad j\ge1.
\label{eq:cascade_j}
\end{align}
The first equation implies that $u_0$ is independent of $y$.

For $i=1,\dots,n$, let $\chi_i(x,y)$ solve the parameterized cell problem
\begin{equation}
\begin{cases}
-\nabla_y\cdot\left(A(x,y)\nabla_y\chi_i(x,y)\right)
=
\nabla_y\cdot\left(A(x,y)e_i\right),
& y\in Y,\\[2mm]
\chi_i(x,\cdot)\ \text{is }Y\text{-periodic},\\[1mm]
\displaystyle \int_Y \chi_i(x,y)\,dy=0.
\end{cases}
\label{eq:first_cell_problem}
\end{equation}
The zero-average normalization is used in the analysis. In numerical implementations, an equivalent gauge, such as fixing the value at a reference point, may also be used whenever the cell functions are continuous; the resulting constant shift is absorbed into the macroscopic component of the corrector.

Define
\[
\boldsymbol\chi=(\chi_1,\dots,\chi_n)^T.
\]
With the sign convention \eqref{eq:first_cell_problem}, the first oscillatory corrector is
\begin{equation}
\hat u_1(x,y)
=
\boldsymbol\chi(x,y)\cdot\nabla_xu_0(x).
\label{eq:hat_u1_def}
\end{equation}
The full first-order corrector is decomposed as
\begin{equation}
u_1(x,y)=u_1^*(x,y)+\tilde u_1(x),
\qquad
u_1^*=\hat u_1.
\label{eq:u1_decomposition}
\end{equation}

The effective tensor is
\begin{equation}
A^*(x)
=
\int_Y
A(x,y)\left(I+\nabla_y\boldsymbol\chi(x,y)\right)\,dy.
\label{eq:Astar_def}
\end{equation}
The solvability condition of \eqref{eq:cascade_2} yields the homogenized equation
\begin{equation}
-\nabla_x\cdot\left(A^*(x)\nabla_xu_0(x)\right)
=
f(x).
\label{eq:homogenized_equation}
\end{equation}
In the principal periodic setting, \eqref{eq:homogenized_equation} and all
subsequent macroscopic corrector equations are solved in the zero-mean
periodic space. In an ideal boundary-matching Dirichlet configuration, $u_0$
and the macroscopic correctors instead satisfy the corresponding homogeneous
Dirichlet conditions.

From the cascade equation \eqref{eq:cascade_2}, the auxiliary second-order oscillatory term $\hat u_2$ is defined by
\begin{equation}
\begin{cases}
-\nabla_y\cdot(A\nabla_y\hat u_2)
=
f
+
\nabla_x\cdot(A\nabla_xu_0)
+
\nabla_x\cdot(A\nabla_y\hat u_1)
+
\nabla_y\cdot(A\nabla_x\hat u_1),
& y\in Y,\\[2mm]
\hat u_2(x,\cdot)\ \text{is }Y\text{-periodic},\\[1mm]
\displaystyle \int_Y\hat u_2(x,y)\,dy=0.
\end{cases}
\label{eq:hat_u2_problem}
\end{equation}
Equivalently,
\[
L_0\hat u_2
=
f-L_1\hat u_1-L_2u_0.
\]

By applying the solvability condition to the cascade equation \eqref{eq:cascade_j} for $j=1$, the macroscopic first-order corrector $\tilde u_1$ is then determined by
\begin{equation}
-\nabla_x\cdot\left(A^*(x)\nabla_x\tilde u_1(x)\right)
=
\nabla_x\cdot
\left[
\int_Y
A(x,y)
\left(
\nabla_x\hat u_1(x,y)+\nabla_y\hat u_2(x,y)
\right)\,dy
\right].
\label{eq:tilde_u1_equation}
\end{equation}
The associated oscillatory correction generated by $\tilde u_1$ is
\begin{equation}
\delta u_2(x,y)
=
\boldsymbol\chi(x,y)\cdot\nabla_x\tilde u_1(x).
\label{eq:delta_u2_def}
\end{equation}
Thus
\begin{equation}
u_2^*(x,y)=\hat u_2(x,y)+\delta u_2(x,y),
\qquad
u_2(x,y)=u_2^*(x,y)+\tilde u_2(x).
\label{eq:u2_decomposition}
\end{equation}

For higher orders, suppose that $u_{k-1}$ and $u_k^*$ have already been constructed. For $k\ge2$, define
\begin{equation}
u_k^*(x,y)=\hat u_k(x,y)+\delta u_k(x,y),
\qquad
u_k(x,y)=u_k^*(x,y)+\tilde u_k(x).
\label{eq:uk_decomposition}
\end{equation}
Derived from the cascade equation \eqref{eq:cascade_j} with $j=k-1$, the auxiliary function $\hat u_{k+1}$ is defined by
\begin{equation}
\begin{cases}
-\nabla_y\cdot(A\nabla_y\hat u_{k+1})
=
\nabla_x\cdot(A\nabla_xu_{k-1})
+
\nabla_x\cdot(A\nabla_yu_k^*)
+
\nabla_y\cdot(A\nabla_xu_k^*),
& y\in Y,\\[2mm]
\hat u_{k+1}(x,\cdot)\ \text{is }Y\text{-periodic},\\[1mm]
\displaystyle \int_Y\hat u_{k+1}(x,y)\,dy=0.
\end{cases}
\label{eq:hat_ukplus1_problem}
\end{equation}
Equivalently,
\begin{equation}
L_0\hat u_{k+1}
=
-L_1u_k^*-L_2u_{k-1}.
\label{eq:hat_ukplus1_operator_form}
\end{equation}
By imposing the solvability condition on the cascade equation \eqref{eq:cascade_j} for $j=k$, the macroscopic corrector $\tilde u_k$ is defined by
\begin{equation}
-\nabla_x\cdot\left(A^*(x)\nabla_x\tilde u_k(x)\right)
=
\nabla_x\cdot
\left[
\int_Y
A(x,y)
\left(
\nabla_xu_k^*(x,y)+\nabla_y\hat u_{k+1}(x,y)
\right)\,dy
\right].
\label{eq:tilde_uk_equation}
\end{equation}
Finally,
\begin{equation}
\delta u_{k+1}(x,y)
=
\boldsymbol\chi(x,y)\cdot\nabla_x\tilde u_k(x).
\label{eq:delta_ukplus1_def}
\end{equation}

The above construction is summarized in Algorithm \ref{alg:high_order_expansion_revised}.

\begin{algorithm}[htb!]
\caption{High-order asymptotic expansion procedure}
\label{alg:high_order_expansion_revised}
\begin{algorithmic}[1]
\State Solve the first-order cell problems \eqref{eq:first_cell_problem} for $\chi_i$, $i=1,\dots,n$.
\State Compute the homogenized tensor $A^*(x)$ by \eqref{eq:Astar_def}.
\State Solve the homogenized equation \eqref{eq:homogenized_equation} for $u_0$.
\State Set $\hat u_1=\boldsymbol\chi\cdot\nabla_xu_0$ and $u_1^*=\hat u_1$.
\State Solve the auxiliary cell problem \eqref{eq:hat_u2_problem} for $\hat u_2$.
\State Solve the macroscopic corrector equation \eqref{eq:tilde_u1_equation} for $\tilde u_1$.
\State Set $u_1=u_1^*+\tilde u_1$.
\State Set $\delta u_2=\boldsymbol\chi\cdot\nabla_x\tilde u_1$ and $u_2^*=\hat u_2+\delta u_2$.
\For{$k=2,3,\dots$}
    \State Given $u_{k-1}$ and $u_k^*$, solve \eqref{eq:hat_ukplus1_problem} for $\hat u_{k+1}$.
    \State Solve \eqref{eq:tilde_uk_equation} for $\tilde u_k$.
    \State Set $\delta u_{k+1}=\boldsymbol\chi\cdot\nabla_x\tilde u_k$ and $u_{k+1}^*=\hat u_{k+1}+\delta u_{k+1}$.
    \State Set $u_k=u_k^*+\tilde u_k$.
\EndFor
\end{algorithmic}
\end{algorithm}

\subsection{Boundary-layer-free convergence analysis}
\label{subsec:high_order_convergence_ideal}

We now justify the high-order convergence mechanism generated by Algorithm~\ref{alg:high_order_expansion_revised}. The analysis uses the same corrector hierarchy and the same operators $L_0,L_1,L_2$ defined in \eqref{eq:L0_def}--\eqref{eq:L2_def}. Boundary layer corrections are not included here. Instead, we work in a boundary-layer-free setting, for instance on the flat torus, or in an ideal boundary-matching situation where the truncated expansion satisfies the same boundary condition as the exact solution.

\paragraph{Derivative notation.}
For a two-scale function $v=v(x,y)$, the symbol $\nabla_x$ always denotes differentiation with respect to the first, slow variable, and $\nabla_y$ denotes differentiation with respect to the second, fast variable. After composition with $y=x/\varepsilon$, the physical gradient with respect to the real spatial variable is denoted by $\nabla$.

For any two-scale function $v(x,y)$, define
\[
v^\varepsilon(x):=v\left(x,\frac{x}{\varepsilon}\right).
\]
Then
\[
\nabla v^\varepsilon(x)
=
\left(\nabla_x v+\frac1\varepsilon\nabla_yv\right)^\varepsilon(x).
\]
We therefore introduce the two-scale $\varepsilon$-gradient and divergence
\[
\nabla^{xy}_\varepsilon
:=
\nabla_x+\frac1\varepsilon\nabla_y,
\qquad
\operatorname{div}^{xy}_\varepsilon
:=
\nabla_x\cdot+\frac1\varepsilon\nabla_y\cdot .
\]
With this notation and the operators in \eqref{eq:L0_def}--\eqref{eq:L2_def},
\[
-\nabla\cdot\left(A^\varepsilon\nabla v^\varepsilon\right)
=
\left[
-\operatorname{div}^{xy}_\varepsilon
\left(A(x,y)\nabla^{xy}_\varepsilon v(x,y)\right)
\right]^\varepsilon.
\]
Here and below
\[
A^\varepsilon(x):=A\left(x,\frac{x}{\varepsilon}\right).
\]

\paragraph{Energy space.}
Let $V$ denote the energy space in which the error is tested. In the periodic case one may take
\[
V=
\left\{
v\in H^1_\#(\mathbb T^n):
\int_{\mathbb T^n}v(x)\,dx=0
\right\},
\]
while in a boundary-matching Dirichlet setting one may take
\[
V=H_0^1(\Omega).
\]
We assume throughout this subsection that $V$ satisfies the Poincare inequality
\begin{equation}
\|v\|_{L^2(\Omega)}
\le
C_P\|\nabla v\|_{L^2(\Omega)},
\qquad v\in V.
\label{eq:poincare_V_final}
\end{equation}
In the periodic case, the zero-average conditions imposed on $f$ and on the
right-hand sides of the cell problems are solvability conditions. They should
not be confused with the zero-average conditions imposed on the corresponding
unknowns, which only select representatives modulo additive constants.

\paragraph{Analytical normalization.}
For the theoretical analysis, all cell functions are normalized by zero $Y$-average. In particular,
\[
\int_Y \chi_i(x,y)\,dy=0,
\qquad
\int_Y \hat u_j(x,y)\,dy=0.
\]
In numerical implementation one may use an equivalent gauge condition, such as fixing the value at a reference point, provided that the induced additive constants are consistently absorbed into the macroscopic correctors $\tilde u_j$.

\begin{remark}[One-dimensional normalization and exact boundary matching]
\label{rem:1d_boundary_matching}
In one spatial dimension, consider the compatible auxiliary equation
\begin{equation}
-\frac{d}{dy}\left(a(x,y)\frac{dv}{dy}(x,y)\right)=g(x,y),
\qquad y\in(0,1),
\label{eq:1d_generic_cell}
\end{equation}
where $a$ and $g$ are periodic in $y$ and $\int_0^1g(x,y)\,dy=0$. The periodic
problem normalized by $v(x,0)=0$ is equivalent to the endpoint problem
$v(x,0)=v(x,1)=0$: periodicity gives one implication, while integration of
\eqref{eq:1d_generic_cell} gives matching endpoint fluxes for the converse.
The compatibility of the recursive cell equations allows this observation to
be applied at every order.

Let $\Omega=(0,1)$ and $\varepsilon=1/N$, $N\in\mathbb N$. Choose the
oscillatory correctors to vanish at $y=0$, absorb the associated additive
shifts into the macroscopic correctors, and impose homogeneous Dirichlet
conditions on all macroscopic components. The physical endpoints correspond
to the fast phases $0$ and $N$; hence every truncated expansion satisfies the
original boundary condition exactly and generates no boundary layer. This is
a special one-dimensional equivalence only. It neither identifies a general
macroscopic Dirichlet problem with a periodic problem nor extends to
$n\ge2$, where a scalar gauge cannot make a periodic corrector vanish on an
entire boundary face. Standard multidimensional Dirichlet problems therefore
require boundary-layer correctors.
\end{remark}

For $k\ge1$, define the partial and full truncated expansions by
\begin{align}
U_k^*(x)
&=
\sum_{j=0}^{k-1}
\varepsilon^j
u_j\left(x,\frac{x}{\varepsilon}\right)
+
\varepsilon^k
u_k^*\left(x,\frac{x}{\varepsilon}\right),
\label{eq:Uk_star_final}
\\
U_k(x)
&=
\sum_{j=0}^{k}
\varepsilon^j
u_j\left(x,\frac{x}{\varepsilon}\right).
\label{eq:Uk_final}
\end{align}

In the periodic setting, zero mean in the fast variable does not by itself imply
that the composed function $u_j^*(x,x/\varepsilon)$ has zero mean in the
physical variable. We therefore introduce the physical normalization operator
\[
\mathcal N_\# w
=
w-
\frac{1}{|\Omega|}
\int_\Omega w(x)\,dx
\]
and use $\mathcal N_\# U_k^*$ and $\mathcal N_\# U_k$ as the periodic
approximations. To avoid additional notation, these normalized representatives
are still denoted by $U_k^*$ and $U_k$ below. This normalization changes neither
the residual equations nor the gradients. In the boundary-matching Dirichlet
setting, no such constant shift is made.

\begin{remark}[Gauge invariance and numerical point normalization]
\label{rem:gauge-invariance-point-normalization}
The zero-mean and point-normalized representatives of a continuous periodic
solution differ only by an additive constant. For a cell function this shift
may depend on $x$ and is absorbed consistently into the macroscopic
correctors $\tilde u_j$, leaving the complete two-scale coefficients and
truncated expansions unchanged. The proof uses the zero-mean representative
to apply the Poincare inequality \eqref{eq:poincare_V_final}; changing to a
pointwise representative leaves the residual equations, gradients, and
$H^1$ semi-norm errors unchanged.
In the numerical experiments, the FEM solution and every TNN expansion are
represented using the same pointwise normalization. The reported $L^2$
errors therefore compare the same representatives; unlike the $H^1$
semi-norm, however, the $L^2$ norm is not itself gauge invariant, and its
convergence rate does not follow from the gauge-invariant energy estimate
alone.
\end{remark}

\begin{lemma}[Compatibility of the recursive cell problems]
\label{lem:compatibility_final}
The right-hand sides of the cell problems \eqref{eq:hat_u2_problem} and \eqref{eq:hat_ukplus1_problem} have zero $Y$-average. Hence the corresponding periodic zero-mean cell problems are solvable.
\end{lemma}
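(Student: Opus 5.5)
Averaging over $Y$ annihilates every term of the form $\nabla_y\cdot(\cdots)$, by periodicity and the divergence theorem on the cell. After averaging, each right-hand side therefore reduces to a pure $\nabla_x$-divergence, plus $f$ in the first case. That remainder vanishes precisely because of the macroscopic equation fixed at the previous stage of Algorithm~\ref{alg:high_order_expansion_revised}. I will check this order by order; solvability then follows from the Fredholm alternative for the coercive periodic operator $L_0$ on $H^1_\#(Y)/\mathbb R$. The kernel of $L_0$ is the constants, so a right-hand side with zero $Y$-average is admissible, and the zero-mean normalization fixes the solution.

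\emph{Case \eqref{eq:hat_u2_problem}.} Averaging, and using $\int_Y\nabla_y\cdot(A\nabla_x\hat u_1)\,dy=0$, we get
\[
\int_Y(\cdots)\,dy = f+\nabla_x\cdot\Big(\int_Y A(\nabla_x u_0+\nabla_y\hat u_1)\,dy\Big).
\]
Here $\nabla_x$ commutes with $\int_Y$ under the assumed regularity. Since $\hat u_1=\boldsymbol\chi\cdot\nabla_x u_0$ and $u_0$ is independent of $y$, we have $\nabla_y\hat u_1=(\nabla_y\boldsymbol\chi)^T\nabla_xu_0$ in the convention of \eqref{eq:Astar_def}. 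The inner integral is thus $A^*\nabla_xu_0$, and the average is $f+\nabla_x\cdot(A^*\nabla_xu_0)=0$ by \eqref{eq:homogenized_equation}.

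\emph{Case \eqref{eq:hat_ukplus1_problem}, $k\ge2$.} The average of the right-hand side equals
\[
\nabla_x\cdot\Big(\int_Y A(\nabla_xu_{k-1}+\nabla_yu_k^*)\,dy\Big).
\]
I would decompose $u_{k-1}=u_{k-1}^*+\tilde u_{k-1}$ and $u_k^*=\hat u_k+\delta u_k$, where $\delta u_k=\boldsymbol\chi\cdot\nabla_x\tilde u_{k-1}$. The $\tilde u_{k-1}$ and $\delta u_k$ pieces combine exactly as in the first case to give $\int_YA(I+\nabla_y\boldsymbol\chi)\,dy\,\nabla_x\tilde u_{k-1}=A^*\nabla_x\tilde u_{k-1}$. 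The remaining pieces give $\int_YA(\nabla_xu_{k-1}^*+\nabla_y\hat u_k)\,dy$. Hence the average is
\[
\nabla_x\cdot(A^*\nabla_x\tilde u_{k-1})+\nabla_x\cdot\Big[\int_YA(\nabla_xu_{k-1}^*+\nabla_y\hat u_k)\,dy\Big],
\]
which vanishes by the definition \eqref{eq:tilde_uk_equation} of $\tilde u_{k-1}$ for $k-1\ge2$, or by \eqref{eq:tilde_u1_equation} for $k=2$, where $u_1^*=\hat u_1$. The argument is an induction on $k$. It is organized so that the well-posedness of each $\hat u_k$, guaranteed at the previous step, feeds the definition of $\tilde u_{k-1}$, which in turn gives compatibility at step $k$.

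The main obstacle is bookkeeping rather than analysis. The index shifts between \eqref{eq:tilde_uk_equation} at level $k-1$ and the cell problem at level $k+1$ must match exactly. In particular, the macroscopic term $\tilde u_{k-1}$ appears in $u_{k-1}$ while its oscillatory partner $\delta u_k$ appears in $u_k^*$, and these must be paired to reconstruct $A^*$. I would also state that $A$ and the correctors are regular enough in $x$ (for instance $C^1$ in $x$ with values in $L^\infty$ and $H^1_\#$ in $y$) to justify interchanging $\nabla_x$ with the $Y$-average. In addition, $f$ must be independent of $y$, so that its average is $f$ itself.
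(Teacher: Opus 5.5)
Your proof is correct and follows the paper's argument. You average over $Y$ to remove the $\nabla_y\cdot$ terms, pair $\tilde u_{k-1}$ with its partner $\delta u_k=\boldsymbol\chi\cdot\nabla_x\tilde u_{k-1}$ to recover $A^*\nabla_x\tilde u_{k-1}$, and cancel the remainder using \eqref{eq:tilde_uk_equation} (or \eqref{eq:tilde_u1_equation} when $k=2$); this is the paper's proof with the index shifted by one. The only point to tidy is the transpose in $\nabla_y\hat u_1=(\nabla_y\boldsymbol\chi)^T\nabla_xu_0$: what you actually use is the identity $\int_YA\bigl(\xi+\nabla_y(\boldsymbol\chi\cdot\xi)\bigr)\,dy=A^*\xi$, which is how \eqref{eq:Astar_def} must be read.
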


\begin{proof}
For \eqref{eq:hat_u2_problem}, periodicity, the identity
$u_1^*=\boldsymbol\chi\cdot\nabla_xu_0$, and \eqref{eq:Astar_def} reduce the
$Y$-average of its right-hand side to
\[
f+\nabla_x\cdot(A^*(x)\nabla_xu_0)=0
\]
by \eqref{eq:homogenized_equation}. For the general step, periodicity gives
\[
\int_Y(L_1u_{k+1}^*+L_2u_k)\,dy
=-\nabla_x\cdot\left[
\int_Y A(\nabla_xu_k^*+\nabla_y\hat u_{k+1})\,dy
+A^*\nabla_x\tilde u_k
\right].
\]
The right-hand side vanishes by \eqref{eq:tilde_uk_equation}; hence every
subsequent cell right-hand side has zero $Y$-average.
\end{proof}

\begin{lemma}[Cascade identities]
\label{lem:cascade_final}
The correctors constructed above satisfy
\begin{align}
L_0u_0&=0,
\label{eq:cascade_identity_0_final}
\\
L_0u_1^*+L_1u_0&=0,
\label{eq:cascade_identity_1_final}
\\
L_0u_{m+1}^*+L_1u_m+L_2u_{m-1}
&=
\delta_{m1}f,
\qquad m\ge1,
\label{eq:cascade_identity_m_final}
\end{align}
where $\delta_{m1}$ is the Kronecker delta.
\end{lemma}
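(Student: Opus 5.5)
The plan is to verify the three identities directly from the definitions, in increasing order of $m$. The only tools are three facts: $L_0$ annihilates functions of $x$ alone, $L_0$ is linear, and the defining equations of the cell problems hold.

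\textbf{First two identities.} For \eqref{eq:cascade_identity_0_final}, $u_0=u_0(x)$ is independent of $y$, so $\nabla_y u_0=0$ and $L_0u_0=0$.

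For \eqref{eq:cascade_identity_1_final}, I would use $u_1^*=\hat u_1=\boldsymbol\chi\cdot\nabla_xu_0$. Since $\nabla_xu_0$ does not depend on $y$,
\[
L_0u_1^*=\sum_i \partial_{x_i}u_0\,\bigl(-\nabla_y\cdot(A\nabla_y\chi_i)\bigr)=\sum_i\partial_{x_i}u_0\,\nabla_y\cdot(Ae_i)=\nabla_y\cdot(A\nabla_xu_0),
\]
where the middle equality is the cell problem \eqref{eq:first_cell_problem}. On the other hand, $\nabla_yu_0=0$ gives $L_1u_0=-\nabla_y\cdot(A\nabla_xu_0)$, so the two terms cancel.

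\textbf{The case $m=1$.} Write $u_2^*=\hat u_2+\delta u_2$ and $u_1=\hat u_1+\tilde u_1$. Then
\[
L_0\hat u_2=f-L_1\hat u_1-L_2u_0
\]
by the operator form of \eqref{eq:hat_u2_problem}. By the same computation as in the previous step, with $u_0$ replaced by $\tilde u_1$,
\[
L_0\delta u_2=\nabla_y\cdot(A\nabla_x\tilde u_1)=-L_1\tilde u_1,
\]
using that $\tilde u_1$ is independent of $y$. Adding these gives
\[
L_0u_2^*=f-L_1(\hat u_1+\tilde u_1)-L_2u_0=f-L_1u_1-L_2u_0.
\]

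\textbf{The case $m\ge2$.} Here $u_{m+1}^*=\hat u_{m+1}+\delta u_{m+1}$, with $L_0\hat u_{m+1}=-L_1u_m^*-L_2u_{m-1}$ by \eqref{eq:hat_ukplus1_operator_form} with $k=m$. Also $L_0\delta u_{m+1}=-L_1\tilde u_m$ by the same $\chi$-identity applied to $\tilde u_m$. Summing, and using $u_m=u_m^*+\tilde u_m$, gives $L_0u_{m+1}^*=-L_1u_m-L_2u_{m-1}$. This matches $\delta_{m1}=0$.

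\textbf{Well-posedness and the main obstacle.} The identities are stated for the correctors actually constructed, so I would also check that each $\hat u_{m+1}$ exists. That is exactly Lemma~\ref{lem:compatibility_final}. Its proof also shows that the macroscopic equation \eqref{eq:tilde_uk_equation} is what makes the next cell right-hand side average to zero, so the hierarchy is well defined at every order.

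The computation itself is routine. The main obstacle is the bookkeeping of indices between the $\hat u_{k+1}$ problem (indexed by $k$) and the cascade identity (indexed by $m$). The general identity $L_0(\boldsymbol\chi\cdot\nabla_x w)=-L_1w$ for any $w=w(x)$, derived once from \eqref{eq:first_cell_problem}, handles all the $\delta u$ terms uniformly. The slow-variable dependence of $A$ does not interfere, because $L_0$ differentiates only in $y$.
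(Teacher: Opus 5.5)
Your proof is correct and follows the paper's argument. The first two identities come from the $y$-independence of $u_0$ and the cell problem for $\chi_i$. For $m\ge1$ you add $L_0\hat u_{m+1}+L_1u_m^*+L_2u_{m-1}=\delta_{m1}f$ to $L_0\delta u_{m+1}+L_1\tilde u_m=0$, then use $u_{m+1}^*=\hat u_{m+1}+\delta u_{m+1}$ and $u_m=u_m^*+\tilde u_m$. Splitting into $m=1$ and $m\ge2$, and deriving $L_0(\boldsymbol\chi\cdot\nabla_x w)=-L_1w$ explicitly, only spells out what the paper compresses into a single line.
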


\begin{proof}
The first two identities follow from the independence of $u_0$ from $y$ and
the first cell problem. For $m\ge1$, the defining cell equations give
\[
L_0\hat u_{m+1}+L_1u_m^*+L_2u_{m-1}=\delta_{m1}f,
\qquad
L_0\delta u_{m+1}+L_1\tilde u_m=0.
\]
Adding them and using $u_{m+1}^*=\hat u_{m+1}+\delta u_{m+1}$ and
$u_m=u_m^*+\tilde u_m$ proves \eqref{eq:cascade_identity_m_final}.
\end{proof}

\begin{lemma}[Zero-mean oscillation estimate]
\label{lem:zero_mean_oscillation_final}
Let $g(x,y)$ be $Y$-periodic in $y$ and satisfy
\[
\int_Y g(x,y)\,dy=0
\qquad
\text{for all }x.
\]
Assume that, for some $\alpha\in(0,1)$,
\[
g\in W^{1,\infty}\left(\Omega;C^\alpha_\#(Y)\right).
\]
Then there exists a constant $C$, independent of $\varepsilon$, such that
\begin{equation}
\left|
\int_\Omega
g\left(x,\frac{x}{\varepsilon}\right)
\Phi(x)\,dx
\right|
\le
C\varepsilon
\|g\|_{W^{1,\infty}(\Omega;C^\alpha_\#(Y))}
\|\Phi\|_{H^1(\Omega)}
\label{eq:zero_mean_estimate_final}
\end{equation}
for all $\Phi\in V$. Equivalently,
\[
\left\|
g\left(x,\frac{x}{\varepsilon}\right)
\right\|_{H^{-1}(\Omega)}
\le
C\varepsilon
\|g\|_{W^{1,\infty}(\Omega;C^\alpha_\#(Y))}.
\]
\end{lemma}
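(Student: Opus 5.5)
The plan is to write $g$ as a fast divergence of a periodic vector field and then integrate by parts, which moves one factor of $\varepsilon$ onto the test function. For each fixed $x$, let $\psi(x,\cdot)$ be the zero-mean $Y$-periodic solution of the cell Poisson problem $\Delta_y\psi(x,y)=g(x,y)$. It is solvable because $\int_Y g\,dy=0$. Set $G:=\nabla_y\psi$, so that $\nabla_y\cdot G=g$ and $G$ is $Y$-periodic. Periodic Schauder theory on the torus gives $\|\psi(x,\cdot)\|_{C^{2,\alpha}_\#(Y)}\le C\|g(x,\cdot)\|_{C^\alpha_\#(Y)}$. The problem is linear with $x$-independent operator $\Delta_y$, so differentiating in $x$ (or using difference quotients) commutes with the solution map. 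This yields
\[
\|G\|_{W^{1,\infty}(\Omega;C^{1,\alpha}_\#(Y))}\le C\|g\|_{W^{1,\infty}(\Omega;C^\alpha_\#(Y))}.
\]
In particular $G$, $\nabla_x G$ and $\nabla_y\cdot G=g$ are bounded, and Carath\'eodory-measurable when composed with $y=x/\varepsilon$. The H\"older assumption is what makes this composition meaningful and gives $L^\infty$ bounds on $G^\varepsilon$ and $(\nabla_xG)^\varepsilon$.

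Next comes the key identity from the chain rule recalled above:
\[
\nabla\cdot\big(G^\varepsilon\big)=(\nabla_x\cdot G)^\varepsilon+\tfrac1\varepsilon(\nabla_y\cdot G)^\varepsilon .
\]
Rearranging,
\[
g^\varepsilon=\varepsilon\,\nabla\cdot(G^\varepsilon)-\varepsilon(\nabla_x\cdot G)^\varepsilon .
\]
Test this against $\Phi\in V$ and integrate by parts in the first term:
\[
\int_\Omega g^\varepsilon\Phi\,dx=-\varepsilon\int_\Omega G^\varepsilon\cdot\nabla\Phi\,dx-\varepsilon\int_\Omega(\nabla_x\cdot G)^\varepsilon\Phi\,dx .
\]
Both integrals are bounded by $C\|g\|_{W^{1,\infty}(\Omega;C^\alpha_\#)}\|\Phi\|_{H^1}$, using $|\Omega|<\infty$ and Cauchy--Schwarz, which gives \eqref{eq:zero_mean_estimate_final}. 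The $H^{-1}$ formulation is then just the dual-norm restatement over $V$.

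The boundary term must vanish. On the torus there is no boundary, and on $V=H^1_0(\Omega)$ the trace of $\Phi$ is zero. To justify the integration by parts for $G^\varepsilon\in W^{1,\infty}(\Omega)$ against $\Phi\in V$, I would approximate $\Phi$ by smooth functions in $V$ (periodic trigonometric polynomials, or $C_c^\infty$) and pass to the limit. I would also check that $G^\varepsilon$ is Lipschitz, which follows from $G\in W^{1,\infty}_x C^{1}_y$.

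I expect the main obstacle to be the regularity transfer in the first step. One must verify that the $x$-dependence is inherited with $W^{1,\infty}$ control by $\psi$ and $\nabla_y\psi$, i.e. that $\partial_{x_i}G=\nabla_y\Delta_y^{-1}\partial_{x_i}g$ holds in the right space. I would handle this by noting that $\Delta_y^{-1}$ is a bounded linear map $C^\alpha_{\#,0}(Y)\to C^{2,\alpha}_\#(Y)$ that is independent of $x$. Composition of a bounded linear operator with a $W^{1,\infty}$ Banach-valued function preserves $W^{1,\infty}$, and since $\partial_{x_i}g$ still has zero $Y$-mean, the inverse applies to it. This avoids any parameter-dependent elliptic estimates.
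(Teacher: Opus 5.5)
Your proposal is correct and is essentially the paper's proof. Your $\psi$ solving $\Delta_y\psi=g$ is the paper's $-\theta$, so your field $G=\nabla_y\psi$ is the paper's $B=-\nabla_y\theta$, and the identity $g^\varepsilon=\varepsilon\nabla\cdot G^\varepsilon-\varepsilon(\nabla_x\cdot G)^\varepsilon$, the integration by parts against $\Phi\in V$, and the final bounds are the same; your extra remarks (commuting $x$-derivatives with the $x$-independent inverse $\Delta_y^{-1}$, Lipschitz continuity of $G^\varepsilon$, and density to justify the integration by parts) only make explicit steps the paper states without detail.
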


\begin{proof}
For each fixed $x$, solve the periodic Poisson problem
\[
-\Delta_y\theta(x,y)=g(x,y),
\qquad
\theta(x,\cdot)\ \text{is }Y\text{-periodic},
\qquad
\int_Y\theta(x,y)\,dy=0.
\]
By periodic Schauder estimates,
\[
\|\theta\|_{W^{1,\infty}(\Omega;C^{2,\alpha}_\#(Y))}
\le
C
\|g\|_{W^{1,\infty}(\Omega;C^\alpha_\#(Y))}.
\]
Set
\[
B(x,y)=-\nabla_y\theta(x,y).
\]
Then
\[
\nabla_y\cdot B(x,y)=g(x,y),
\]
and
\[
\|B\|_{L^\infty(\Omega\times Y)}
+
\|\nabla_x\cdot B\|_{L^\infty(\Omega\times Y)}
\le
C
\|g\|_{W^{1,\infty}(\Omega;C^\alpha_\#(Y))}.
\]
Define
\[
B^\varepsilon(x)
=
B\left(x,\frac{x}{\varepsilon}\right).
\]
By the chain rule for the physical divergence,
\[
\nabla\cdot B^\varepsilon(x)
=
(\nabla_x\cdot B)\left(x,\frac{x}{\varepsilon}\right)
+
\frac1\varepsilon
(\nabla_y\cdot B)\left(x,\frac{x}{\varepsilon}\right).
\]
Hence
\[
g\left(x,\frac{x}{\varepsilon}\right)
=
\varepsilon\nabla\cdot B^\varepsilon(x)
-
\varepsilon
(\nabla_x\cdot B)\left(x,\frac{x}{\varepsilon}\right).
\]
Testing against $\Phi\in V$ and integrating by parts gives
\[
\begin{aligned}
\int_\Omega
g\left(x,\frac{x}{\varepsilon}\right)\Phi(x)\,dx
&=
-\varepsilon
\int_\Omega
B^\varepsilon(x)\cdot\nabla\Phi(x)\,dx
-
\varepsilon
\int_\Omega
(\nabla_x\cdot B)^\varepsilon(x)\Phi(x)\,dx.
\end{aligned}
\]
There is no boundary contribution in the periodic case, and the boundary term vanishes in the $H_0^1$ case. Therefore
\[
\left|
\int_\Omega
g\left(x,\frac{x}{\varepsilon}\right)\Phi(x)\,dx
\right|
\le
C\varepsilon
\|g\|_{W^{1,\infty}(\Omega;C^\alpha_\#(Y))}
\|\Phi\|_{H^1(\Omega)}.
\]
This proves the estimate.
\end{proof}

\begin{lemma}[Regularity of the recursive corrector hierarchy]
\label{lem:regularity_hierarchy_final}
Fix an integer $K\ge1$. Assume that $A$ is symmetric, uniformly elliptic, $Y$-periodic in $y$, and that for some $\alpha\in(0,1)$ and some sufficiently large integer $M_K$,
\[
A\in C^{M_K,\alpha}\left(\overline\Omega;C^{M_K,\alpha}_\#(Y)\right),
\qquad
f\in C^{M_K,\alpha}(\overline\Omega).
\]
Assume also that the macroscopic elliptic problems for $u_0$ and $\tilde u_j$, $1\le j\le K$, satisfy the corresponding Schauder estimates in the chosen energy setting.

Then the correctors generated by the above hierarchy are well-defined up to order $K+1$. Moreover, for each $1\le j\le K+1$, the functions $u_j^*$ are sufficiently regular so that
\[
u_j^*\in W^{3,\infty}\left(\Omega;C^{2,\alpha}_\#(Y)\right),
\]
and the residual functions defined in Lemma~\ref{lem:residual_identity_final} below satisfy
\[
R_K\in W^{1,\infty}\left(\Omega;C^\alpha_\#(Y)\right),
\qquad
S_K\in L^\infty(\Omega\times Y).
\]
In particular, there exists a constant $C_K$, independent of $\varepsilon$, such that
\[
\|R_K\|_{W^{1,\infty}(\Omega;C^\alpha_\#(Y))}
+
\|S_K\|_{L^\infty(\Omega\times Y)}
\le C_K.
\]
\end{lemma}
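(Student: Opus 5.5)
The proof is an induction on the order of the hierarchy. It tracks a decreasing regularity index $m_j$ for every corrector and chooses $M_K$ large enough that $m_{K+1}\ge 3$. Two ingredients drive it.

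\emph{Parameterized cell problems.} For $L_0 w=F$ with $F\in C^{m,\alpha}(\overline\Omega;C^{\ell,\alpha}_\#(Y))$ and $\int_Y F\,dy=0$, the periodic Schauder theory gives, for each fixed $x$, a unique zero-mean solution with $\|w(x,\cdot)\|_{C^{\ell+2,\alpha}_\#(Y)}\le C\|F(x,\cdot)\|_{C^{\ell,\alpha}_\#(Y)}$. The constant depends only on $\lambda,\Lambda$ and $\|A(x,\cdot)\|_{C^{\ell+1,\alpha}}$.

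To get regularity in $x$, differentiate the equation in $x$. The quotient or derivative $\partial_x^\beta w$ solves $L_0(\partial_x^\beta w)=\partial_x^\beta F+\sum_{\gamma<\beta}c_{\beta\gamma}\nabla_y\cdot\bigl(\partial_x^{\beta-\gamma}A\,\nabla_y\partial_x^\gamma w\bigr)$. The right-hand side again has zero $Y$-average because it is a $y$-divergence plus $\partial_x^\beta F$. Induction on $|\beta|$ then yields $w\in C^{m,\alpha'}(\overline\Omega;C^{\ell+1,\alpha}_\#(Y))$. One $y$-derivative is lost per $x$-derivative from the commutator terms; a slightly smaller Hölder exponent in $x$, or working with $W^{m,\infty}$ in $x$, is harmless.

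\emph{Macroscopic problems.} $A^*$ inherits the regularity of $\chi$ through \eqref{eq:Astar_def} and is uniformly elliptic (standard). The assumed Schauder estimates for $u_0$ and $\tilde u_j$ then give $C^{m+2,\alpha}$ regularity from a $C^{m,\alpha}$ right-hand side. In the divergence-form equation \eqref{eq:tilde_uk_equation} the data is $\nabla_x\cdot G$, with $G$ a $Y$-average of a regular function, so $\tilde u_k$ has one more derivative than $G$.

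\emph{Inductive step.} Apply these in the order of Algorithm~\ref{alg:high_order_expansion_revised}:
\begin{itemize}
\item $\chi$ is smooth of order roughly $M_K$ in both variables, so $A^*\in C^{M_K-1,\alpha}$ and $u_0\in C^{M_K+1,\alpha}$.
\item $\hat u_1=\boldsymbol\chi\cdot\nabla_x u_0$ loses at most one $x$-derivative.
\item Each subsequent $\hat u_{k+1}$ has right-hand side $-L_1u_k^*-L_2u_{k-1}$. This costs up to two $x$-derivatives and one $y$-derivative. Cell solvability is supplied by Lemma~\ref{lem:compatibility_final}, and the $y$-derivative is regained by the cell estimate.
\item $\tilde u_k$ costs at most one $x$-derivative net, and $\delta u_{k+1}=\boldsymbol\chi\cdot\nabla_x\tilde u_k$ one more.
\end{itemize}
Hence each order consumes a bounded number $c_0$ of $x$-derivatives, about $3$. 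Taking $M_K\ge c_0(K+2)+5$ gives $u_j^*\in W^{3,\infty}(\Omega;C^{2,\alpha}_\#(Y))$ for all $j\le K+1$.

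\emph{Residuals.} $R_K$ and $S_K$ are defined in Lemma~\ref{lem:residual_identity_final}. I expect them to be finite combinations of $L_1u_{K+1}^*$, $L_2u_K$, $L_2u_{K+1}^*$, or flux-type quantities $A(\nabla_xu_{K+1}^*+\dots)$. By Lemma~\ref{lem:cascade_final}, every $\varepsilon^{-2},\dots,\varepsilon^{K-1}$ term cancels, so only these top-order expressions survive.

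\begin{itemize}
\item Expressions with at most two $x$-derivatives and one $y$-derivative of $u_{K+1}^*\in W^{3,\infty}(\Omega;C^{2,\alpha}_\#)$, multiplied by $A\in C^{1}(\overline\Omega;C^{1,\alpha}_\#)$, lie in $W^{1,\infty}(\Omega;C^\alpha_\#(Y))$. This handles $R_K$.
\item $S_K$ involves at most second derivatives, so it is in $L^\infty(\Omega\times Y)$.
\end{itemize}
The bound $C_K$ comes from chaining the constants of all the estimates. These depend only on $\lambda,\Lambda$, the norms of $A$ and $f$, and the macroscopic Schauder constants, never on $\varepsilon$, since every object is built on $\Omega\times Y$ before composition.

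The main obstacle is the first step: the uniform-in-$x$ regularity of the parameterized cell solutions in the mixed space, including differentiability of $x\mapsto w(x,\cdot)$ into $C^{\ell,\alpha}_\#$. I would first prove it by difference quotients in $x$ combined with the uniform cell Schauder bound. If that proves awkward, the fallback is to work in $W^{m,\infty}$ in $x$, which is all the statement requires.
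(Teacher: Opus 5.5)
Your proposal follows the paper's proof essentially step for step: a finite induction along Algorithm~\ref{alg:high_order_expansion_revised}, with four ingredients. These are parameter-dependent periodic Schauder estimates for the cell problems, with $x$-regularity obtained by differentiating the cell equation; Lemma~\ref{lem:compatibility_final} for solvability; the assumed macroscopic Schauder estimates for $u_0$ and $\tilde u_j$; and a large enough $M_K$ to absorb the finitely many derivative losses, after which the residual bounds are read off from the definitions. Your derivative bookkeeping is more explicit and somewhat conservative. One harmless slip is that the residuals are actually $R_K=L_1u_K^*+L_2u_{K-1}-\delta_{K1}f$ and $S_K=L_2u_K^*$, one index lower than you guessed, but your regularity for all correctors up to order $K+1$ covers them by the same derivative count.
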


\begin{proof}
The proof is a finite induction based on parameter-dependent elliptic regularity. For each fixed $x$, the operator
\[
L_0=-\nabla_y\cdot(A(x,y)\nabla_y)
\]
is uniformly elliptic on the periodic zero-mean space. Hence, whenever $F(x,\cdot)$ has zero $Y$-average, the cell problem
\[
L_0w(x,\cdot)=F(x,\cdot),
\qquad
\int_Yw(x,y)\,dy=0,
\]
has a unique periodic solution. Periodic Schauder estimates yield
\[
\|w(x,\cdot)\|_{C^{2,\alpha}_\#(Y)}
\le
C
\|F(x,\cdot)\|_{C^\alpha_\#(Y)}.
\]
Differentiating this cell problem with respect to the slow variable $x$ gives equations for $\partial_x^\beta w$ whose right-hand sides involve derivatives of $A$, lower derivatives of $w$, and derivatives of $F$. Since $A$ is sufficiently smooth, repeated application of the same estimate gives parameter-dependent bounds in $x$.

The first-order cell functions $\chi_i$ are therefore smooth in both $x$ and $y$. It follows that $A^*(x)$ is uniformly elliptic and has the required smoothness in $x$. The regularity of $u_0$ follows from the assumed Schauder estimates for the homogenized macroscopic equation.

Assume now that all terms up to a certain order have the claimed regularity. By Lemma~\ref{lem:compatibility_final}, the right-hand side of the next cell problem has zero $Y$-average. Since this right-hand side is composed of $A$, previously constructed correctors, and their $x$- and $y$-derivatives, it has the required regularity. The parameter-dependent cell estimates then give the desired bound for the next auxiliary corrector $\hat u_{j+1}$.

The macroscopic corrector $\tilde u_j$ solves an equation of the form
\[
-\nabla_x\cdot(A^*(x)\nabla_x\tilde u_j)
=
\nabla_x\cdot G_j(x),
\]
where
\[
G_j(x)
=
\int_Y
A(x,y)
\left(
\nabla_xu_j^*(x,y)+\nabla_y\hat u_{j+1}(x,y)
\right)\,dy.
\]
The regularity of $G_j$ follows from the induction hypothesis and the estimate for $\hat u_{j+1}$. The assumed macroscopic Schauder estimate yields the corresponding regularity of $\tilde u_j$. Since
\[
\delta u_{j+1}
=
\boldsymbol\chi\cdot\nabla_x\tilde u_j,
\]
the same regularity follows for $\delta u_{j+1}$ and hence for
\[
u_{j+1}^*=\hat u_{j+1}+\delta u_{j+1}.
\]
Because only finitely many correctors are needed, choosing $M_K$ sufficiently large closes the induction up to order $K+1$.

Finally, the asserted bounds for $R_K$ and $S_K$ follow directly from their definitions in Lemma~\ref{lem:residual_identity_final}, together with the established regularity of the hierarchy.
\end{proof}

\begin{lemma}[Residual identity for the partial expansion]
\label{lem:residual_identity_final}
Let $U_k^*$ be defined by \eqref{eq:Uk_star_final}. Under the assumptions of Lemma~\ref{lem:regularity_hierarchy_final}, one has
\begin{equation}
-\nabla\cdot
\left(
A^\varepsilon\nabla U_k^*
\right)
-f
=
\varepsilon^{k-1}
R_k\left(x,\frac{x}{\varepsilon}\right)
+
\varepsilon^k
S_k\left(x,\frac{x}{\varepsilon}\right),
\label{eq:residual_identity_final}
\end{equation}
where
\begin{equation}
R_k
=
L_1u_k^*
+
L_2u_{k-1}
-
\delta_{k1}f,
\label{eq:Rk_def_final}
\end{equation}
and
\begin{equation}
S_k
=
L_2u_k^*.
\label{eq:Sk_def_final}
\end{equation}
Moreover,
\begin{equation}
\int_YR_k(x,y)\,dy=0.
\label{eq:Rk_zero_mean_final}
\end{equation}
\end{lemma}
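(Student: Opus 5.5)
Write $U_k^*=\sum_{j=0}^{k-1}\varepsilon^ju_j^\varepsilon+\varepsilon^k(u_k^*)^\varepsilon$ and apply the two-scale expansion of the operator stated before \eqref{eq:L0_def}:
\[
-\nabla\cdot(A^\varepsilon\nabla v^\varepsilon)=\left[\varepsilon^{-2}L_0v+\varepsilon^{-1}L_1v+L_2v\right]^\varepsilon .
\]
By linearity, $-\nabla\cdot(A^\varepsilon\nabla U_k^*)$ equals the composition with $y=x/\varepsilon$ of a formal Laurent sum in $\varepsilon$. Setting $v_j=u_j$ for $j<k$ and $v_k=u_k^*$, the coefficient of $\varepsilon^{m}$ is $L_0v_{m+2}+L_1v_{m+1}+L_2v_m$, with the convention $v_j=0$ for $j<0$ or $j>k$. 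I will collect the powers from $\varepsilon^{-2}$ up to $\varepsilon^{k}$.

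For the cancellations I use Lemma~\ref{lem:cascade_final}.
\begin{itemize}
\item The $\varepsilon^{-2}$ coefficient is $L_0u_0=0$.
\item For $k\ge2$, the $\varepsilon^{-1}$ coefficient is $L_0u_1+L_1u_0$. Since $\tilde u_1$ depends only on $x$, $L_0u_1=L_0u_1^*$, so this vanishes by \eqref{eq:cascade_identity_1_final}.
\item For $0\le m\le k-3$, the $\varepsilon^{m}$ coefficient is $L_0u_{m+2}+L_1u_{m+1}+L_2u_m$. Because $L_0\tilde u_{m+2}=0$, this equals $L_0u_{m+2}^*+L_1u_{m+1}+L_2u_m=\delta_{m+1,1}f$ by \eqref{eq:cascade_identity_m_final}. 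So it is $f$ when $m=0$ and zero otherwise, which produces the $f$ that is subtracted.
\item The $\varepsilon^{k-2}$ coefficient is $L_0u_k^*+L_1u_{k-1}+L_2u_{k-2}$, which equals $\delta_{k-1,1}f$ (for $k\ge2$). Again this supplies $f$ exactly when $k=2$.
\item The remaining coefficients are $\varepsilon^{k-1}(L_1u_k^*+L_2u_{k-1})$ and $\varepsilon^k L_2u_k^*$. After subtracting $f$ these give $R_k$ and $S_k$. The term $-\delta_{k1}f$ in $R_k$ accounts for the case $k=1$, where $f$ first appears in the $\varepsilon^0$ slot, which is exactly the $R_1$ position.
\end{itemize}
Each of the cases $k=1$, $k=2$ and $k\ge3$ needs to be checked separately so that $f$ is counted exactly once. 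I expect this index bookkeeping to be the only real care point in the identity.

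Regularity enters only to justify the chain rule and the classical differentiation. Lemma~\ref{lem:regularity_hierarchy_final} gives $u_j^*\in W^{3,\infty}(\Omega;C^{2,\alpha}_\#(Y))$, so the identity holds pointwise, or in $L^\infty$. The physical normalization $\mathcal N_\#$ adds only constants and does not affect the identity.

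For \eqref{eq:Rk_zero_mean_final}, integrate over $Y$. The $y$-divergence part of $L_1u_k^*$ integrates to zero by periodicity. What remains is
\[
\int_YR_k\,dy=-\nabla_x\cdot\int_YA\left(\nabla_yu_k^*+\nabla_xu_{k-1}\right)dy-\delta_{k1}f.
\]
For $k=1$, $u_1^*=\boldsymbol\chi\cdot\nabla_xu_0$. By \eqref{eq:Astar_def} the flux integral is $A^*\nabla_xu_0$, so the average becomes $-\nabla_x\cdot(A^*\nabla_xu_0)-f=0$ by \eqref{eq:homogenized_equation}.

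For $k\ge2$, write $u_{k-1}=u_{k-1}^*+\tilde u_{k-1}$ and $u_k^*=\hat u_k+\boldsymbol\chi\cdot\nabla_x\tilde u_{k-1}$. The $\tilde u_{k-1}$ contributions combine into $A^*\nabla_x\tilde u_{k-1}$. The remaining terms give $\int_YA(\nabla_xu_{k-1}^*+\nabla_y\hat u_k)\,dy$. Their divergence cancels against $\nabla_x\cdot(A^*\nabla_x\tilde u_{k-1})$ by \eqref{eq:tilde_uk_equation} with index $k-1$. This is exactly the computation in the proof of Lemma~\ref{lem:compatibility_final}, with indices shifted, so I will cite it there.
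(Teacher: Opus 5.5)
Your proof is correct, and the residual identity is derived exactly as in the paper. You expand $-\nabla\cdot(A^\varepsilon\nabla U_k^*)$ in powers of $\varepsilon$ and cancel every coefficient through $\varepsilon^{k-2}$ with Lemma~\ref{lem:cascade_final}; you just write out the index bookkeeping that the paper leaves implicit. One small point: for $k=1$ the $\varepsilon^{-1}$ coefficient is $L_0u_1^*+L_1u_0$, which vanishes by \eqref{eq:cascade_identity_1_final}, and your bullets cover that case only implicitly.

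The zero-mean property is where your route differs. You compute $\int_Y R_k\,dy$ directly. For $k=1$ you cancel it with the homogenized equation, and for $k\ge2$ with the macroscopic equation for $\tilde u_{k-1}$. In effect you redo the computation of Lemma~\ref{lem:compatibility_final} with shifted indices.

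The paper instead reads the answer off the defining cell problem for $\hat u_{k+1}$. The extra $f$ in \eqref{eq:hat_u2_problem} matches the $-\delta_{k1}f$ in $R_k$, so that cell problem says precisely $R_k=-L_0\hat u_{k+1}=\nabla_y\cdot(A\nabla_y\hat u_{k+1})$. The average then vanishes by periodicity in one line.

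What each version buys:
\begin{itemize}
\item The paper's version is shorter and gives a stronger structural fact: $R_k$ is an exact $y$-divergence of a periodic flux.
\item It presupposes that $\hat u_{k+1}$ exists. That existence is supplied by Lemma~\ref{lem:compatibility_final}, together with the well-posedness of the hierarchy up to order $k+1$ in Lemma~\ref{lem:regularity_hierarchy_final}.
\item Your version uses only correctors up to order $k$, and it shows explicitly which macroscopic equation enforces the cancellation.
\end{itemize}
Both arguments ultimately rest on the same compatibility computation.
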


\begin{proof}
Apply the operator decomposition \eqref{eq:L0_def}--\eqref{eq:L2_def} to
$U_k^*$ and collect equal powers of $\varepsilon$. Lemma~\ref{lem:cascade_final}
cancels all terms through order $\varepsilon^{k-2}$, leaving precisely
\[
\varepsilon^{k-1}
\bigl(L_1u_k^*+L_2u_{k-1}-\delta_{k1}f\bigr)^\varepsilon
+\varepsilon^k(L_2u_k^*)^\varepsilon,
\]
which proves \eqref{eq:residual_identity_final}. The defining equation for
$\hat u_{k+1}$ gives, for every $k\ge1$,
\[
R_k=-L_0\hat u_{k+1}.
\]
Its $Y$-average vanishes because $\hat u_{k+1}$ is periodic, proving
\eqref{eq:Rk_zero_mean_final}.
\end{proof}

\begin{theorem}[High-order $H^1$ convergence without boundary layers]
\label{thm:H1_high_order_no_boundary_final}
Fix $k\ge1$. Assume the regularity hypotheses of Lemma~\ref{lem:regularity_hierarchy_final} hold with $K=k$. Assume also that no boundary layer is generated, in the sense that
\[
u_\varepsilon-U_k^*\in V.
\]
In the periodic setting, $\varepsilon=1/N$, $N\in\mathbb N$, and $U_k^*$ is
the physically normalized representative defined above. The same compatible
scales are used in the one-dimensional exact boundary-matching setting of
Remark~\ref{rem:1d_boundary_matching}.
Then there exists a constant $C_k>0$, independent of $\varepsilon$, such that
\begin{equation}
\|u_\varepsilon-U_k^*\|_{H^1(\Omega)}
\le
C_k\varepsilon^k.
\label{eq:H1_estimate_final}
\end{equation}
\end{theorem}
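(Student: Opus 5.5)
The plan is a standard energy argument applied to the error $e_\varepsilon:=u_\varepsilon-U_k^*$, which lies in $V$ by hypothesis. The physical normalization $\mathcal N_\#$ only shifts $U_k^*$ by a constant, so it changes neither gradients nor the residual. By Lemma~\ref{lem:residual_identity_final}, $e_\varepsilon$ solves
\[
-\nabla\cdot(A^\varepsilon\nabla e_\varepsilon)
=
-\varepsilon^{k-1}R_k\left(x,\frac{x}{\varepsilon}\right)
-\varepsilon^{k}S_k\left(x,\frac{x}{\varepsilon}\right)
\]
in the weak sense on $V$. Testing with $\Phi=e_\varepsilon$ and using uniform ellipticity \eqref{eq:uniform_ellipticity} gives
\[
\lambda\|\nabla e_\varepsilon\|_{L^2}^2
\le
\varepsilon^{k-1}\left|\int_\Omega R_k^\varepsilon e_\varepsilon\,dx\right|
+\varepsilon^{k}\left|\int_\Omega S_k^\varepsilon e_\varepsilon\,dx\right|.
\]
The task is then to show that each term on the right is bounded by $C\varepsilon^k\|e_\varepsilon\|_{H^1}$.

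The $S_k$ term is immediate. Lemma~\ref{lem:regularity_hierarchy_final} with $K=k$ gives $\|S_k\|_{L^\infty(\Omega\times Y)}\le C_k$, so $\|S_k^\varepsilon\|_{L^2(\Omega)}\le C_k|\Omega|^{1/2}$. By Cauchy--Schwarz this term is at most $C\varepsilon^k\|e_\varepsilon\|_{L^2}$.

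The $R_k$ term carries only the prefactor $\varepsilon^{k-1}$, so it is the crux, and the missing power of $\varepsilon$ must come from oscillation. By \eqref{eq:Rk_zero_mean_final}, $R_k$ has zero $Y$-average for every $x$. Lemma~\ref{lem:regularity_hierarchy_final} places it in $W^{1,\infty}(\Omega;C^\alpha_\#(Y))$ with norm at most $C_k$. These are exactly the hypotheses of Lemma~\ref{lem:zero_mean_oscillation_final} with $g=R_k$ and $\Phi=e_\varepsilon\in V$, which gives
\[
\left|\int_\Omega R_k^\varepsilon e_\varepsilon\,dx\right|
\le C\varepsilon C_k\|e_\varepsilon\|_{H^1}.
\]
The point to check here is that the integration by parts inside that lemma produces no boundary terms. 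In the torus case this holds because $\varepsilon=1/N$ makes $B(x,x/\varepsilon)$ genuinely periodic on $\mathbb T^n$. In the one-dimensional matching case of Remark~\ref{rem:1d_boundary_matching} it holds because $e_\varepsilon\in H_0^1$. This is also where the compatible-scale assumption enters.

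Combining the two bounds gives $\lambda\|\nabla e_\varepsilon\|^2\le C\varepsilon^k\|e_\varepsilon\|_{H^1}$. Applying the Poincaré inequality \eqref{eq:poincare_V_final}, which is valid because $e_\varepsilon\in V$ (in the periodic case precisely because both $u_\varepsilon$ and the normalized $U_k^*$ have zero physical mean), yields $\|e_\varepsilon\|_{H^1}\le(1+C_P^2)^{1/2}\|\nabla e_\varepsilon\|$. Dividing through gives \eqref{eq:H1_estimate_final}, with a constant that depends on $k$, $\lambda$, $C_P$, $|\Omega|$ and $C_k$ but not on $\varepsilon$.
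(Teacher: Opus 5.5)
Your proposal is correct and follows essentially the same route as the paper's proof. It uses the residual identity of Lemma~\ref{lem:residual_identity_final}, gains a factor $\varepsilon$ on the $R_k$ term from the zero-mean oscillation estimate of Lemma~\ref{lem:zero_mean_oscillation_final}, bounds the $S_k$ term by Cauchy--Schwarz, and then tests with the error and applies ellipticity and the Poincar\'e inequality \eqref{eq:poincare_V_final}. Your observation that $\varepsilon=1/N$ is what makes $B(x,x/\varepsilon)$ periodic, so that no boundary term arises on the torus, is a useful detail the paper leaves implicit.
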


\begin{proof}
Let
\[
e_k^\varepsilon=u_\varepsilon-U_k^*.
\]
By assumption, $e_k^\varepsilon\in V$. The weak formulation gives
\[
\int_\Omega
A^\varepsilon\nabla e_k^\varepsilon\cdot\nabla\Phi\,dx
=
-\left\langle
-\nabla\cdot(A^\varepsilon\nabla U_k^*)-f,
\Phi
\right\rangle
\qquad
\forall \Phi\in V.
\]
Using Lemma~\ref{lem:residual_identity_final}, we get
\[
\int_\Omega
A^\varepsilon\nabla e_k^\varepsilon\cdot\nabla\Phi\,dx
=
-\varepsilon^{k-1}
\int_\Omega R_k^\varepsilon\Phi\,dx
-
\varepsilon^k
\int_\Omega S_k^\varepsilon\Phi\,dx,
\]
where
\[
R_k^\varepsilon(x)=R_k\left(x,\frac{x}{\varepsilon}\right),
\qquad
S_k^\varepsilon(x)=S_k\left(x,\frac{x}{\varepsilon}\right).
\]

By Lemma~\ref{lem:residual_identity_final}, $R_k$ has zero $Y$-average. By Lemma~\ref{lem:regularity_hierarchy_final},
\[
R_k\in W^{1,\infty}\left(\Omega;C^\alpha_\#(Y)\right).
\]
Thus the zero-mean oscillation estimate, Lemma~\ref{lem:zero_mean_oscillation_final}, gives
\[
\left|
\int_\Omega R_k^\varepsilon\Phi\,dx
\right|
\le
C_k\varepsilon
\|\Phi\|_{H^1(\Omega)}.
\]
Moreover,
\[
\left|
\int_\Omega S_k^\varepsilon\Phi\,dx
\right|
\le
\|S_k^\varepsilon\|_{L^2(\Omega)}
\|\Phi\|_{L^2(\Omega)}
\le
|\Omega|^{1/2}
\|S_k\|_{L^\infty(\Omega\times Y)}
\|\Phi\|_{L^2(\Omega)}
\le
C_k
\|\Phi\|_{H^1(\Omega)}.
\]
Hence
\[
\left|
\int_\Omega
A^\varepsilon\nabla e_k^\varepsilon\cdot\nabla\Phi\,dx
\right|
\le
C_k\varepsilon^k
\|\Phi\|_{H^1(\Omega)}.
\]
Taking $\Phi=e_k^\varepsilon$ yields
\[
\int_\Omega
A^\varepsilon\nabla e_k^\varepsilon\cdot\nabla e_k^\varepsilon\,dx
\le
C_k\varepsilon^k
\|e_k^\varepsilon\|_{H^1(\Omega)}.
\]
By uniform ellipticity,
\[
\lambda
\|\nabla e_k^\varepsilon\|_{L^2(\Omega)}^2
\le
C_k\varepsilon^k
\|e_k^\varepsilon\|_{H^1(\Omega)}.
\]
Using the Poincare inequality \eqref{eq:poincare_V_final},
\[
\|e_k^\varepsilon\|_{H^1(\Omega)}
\le
C
\|\nabla e_k^\varepsilon\|_{L^2(\Omega)}.
\]
Therefore,
\[
\|e_k^\varepsilon\|_{H^1(\Omega)}
\le
C_k\varepsilon^k.
\]
This proves \eqref{eq:H1_estimate_final}.
\end{proof}

\begin{corollary}[$H^1$ estimate for the full expansion]
\label{cor:H1_full_expansion}
Under the assumptions of Theorem~\ref{thm:H1_high_order_no_boundary_final},
assume in addition that $\tilde u_k$ is uniformly bounded in $H^1(\Omega)$.
Then the full expansion $U_k$ defined in \eqref{eq:Uk_final} satisfies
\[
\|u_\varepsilon-U_k\|_{H^1(\Omega)}
\le
C_k\varepsilon^k.
\]
\end{corollary}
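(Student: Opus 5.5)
The plan is to write $u_\varepsilon-U_k=(u_\varepsilon-U_k^*)-\varepsilon^k\tilde u_k$ and use the triangle inequality in $H^1(\Omega)$. The key observation is that the full and partial expansions differ only by the macroscopic term:
\[
U_k-U_k^*=\varepsilon^k\bigl(u_k-u_k^*\bigr)\left(x,\frac{x}{\varepsilon}\right)=\varepsilon^k\tilde u_k(x).
\]
This follows from \eqref{eq:uk_decomposition} for $k\ge2$ and from \eqref{eq:u1_decomposition} for $k=1$. Since $\tilde u_k$ is independent of the fast variable, no factor $\varepsilon^{-1}$ arises when it is differentiated after composition. Hence $\|\nabla(\varepsilon^k\tilde u_k)\|_{L^2}=\varepsilon^k\|\nabla_x\tilde u_k\|_{L^2}$. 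This is the only structural point, and it is where the decomposition into oscillatory and macroscopic parts pays off.

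The argument then runs in three steps. First, Theorem~\ref{thm:H1_high_order_no_boundary_final} gives $\|u_\varepsilon-U_k^*\|_{H^1(\Omega)}\le C_k\varepsilon^k$. Second, by the assumption that $\tilde u_k$ is uniformly bounded in $H^1(\Omega)$, we get $\|\varepsilon^k\tilde u_k\|_{H^1(\Omega)}\le \varepsilon^k\sup\|\tilde u_k\|_{H^1}$. Third, adding the two bounds gives the claim with a modified constant.

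One bookkeeping issue needs care. In the periodic setting both $U_k$ and $U_k^*$ are replaced by their physically normalized representatives $\mathcal N_\#U_k$ and $\mathcal N_\#U_k^*$, so their difference is $\varepsilon^k\mathcal N_\#\tilde u_k$ rather than $\varepsilon^k\tilde u_k$. I will handle this by noting that $\tilde u_k$ is already taken in the zero-mean periodic space, so $\mathcal N_\#\tilde u_k=\tilde u_k$. More generally, $\|\mathcal N_\# w\|_{H^1}\le C\|w\|_{H^1}$ by Cauchy--Schwarz on the mean, so the bound survives in either case.

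In the Dirichlet boundary-matching setting no shift is made. There $\tilde u_k\in H^1_0$, so the identity above holds verbatim.

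I do not expect a genuine obstacle. The only point to state explicitly is that the constant obtained is uniform in $\varepsilon$. That holds because $\tilde u_k$ solves a macroscopic problem with no $\varepsilon$-dependence, and under Lemma~\ref{lem:regularity_hierarchy_final} it is in fact bounded even without the extra hypothesis.
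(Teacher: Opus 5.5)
Your proposal is correct and follows the paper's proof essentially verbatim. Both rest on the identity $U_k-U_k^*=\varepsilon^k\tilde u_k$, on the triangle inequality combined with Theorem~\ref{thm:H1_high_order_no_boundary_final}, and on the observation that in the periodic setting $\tilde u_k$ has zero mean, so the physical normalization $\mathcal N_\#$ preserves that identity. Your side remark is also accurate: since $\tilde u_k$ does not depend on $\varepsilon$ and is regular by Lemma~\ref{lem:regularity_hierarchy_final}, the extra uniform-boundedness hypothesis is essentially automatic.
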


\begin{proof}
Since $U_k-U_k^*=\varepsilon^k\tilde u_k$, the triangle inequality and
Theorem~\ref{thm:H1_high_order_no_boundary_final} give
\[
\|u_\varepsilon-U_k\|_{H^1(\Omega)}
\le
\|u_\varepsilon-U_k^*\|_{H^1(\Omega)}
+
\varepsilon^k\|\tilde u_k\|_{H^1(\Omega)}
\le
C_k\varepsilon^k.
\]
In the periodic setting, $\tilde u_k$ has zero mean, so the physical
normalization introduced above preserves the identity
$U_k-U_k^*=\varepsilon^k\tilde u_k$.
\end{proof}

\begin{remark}[Conditional $L^2$ estimate for the full expansion]
\label{rem:L2_conditional_final}
The theorem above is the main convergence result used in this work. If, in addition, the same assumptions hold at order $k+1$, so that
\[
\|u_\varepsilon-U_{k+1}^*\|_{H^1(\Omega)}
\le
C_{k+1}\varepsilon^{k+1},
\]
and if
\[
\|w_{k+1}^\varepsilon\|_{L^2(\Omega)}
\le
C_{k+1},
\]
where
\[
w_{k+1}^\varepsilon
=
\begin{cases}
\mathcal N_\#\!\left[
u_{k+1}^*\left(x,x/\varepsilon\right)
\right],
& \text{in the periodic setting},\\[1mm]
u_{k+1}^*\left(x,x/\varepsilon\right),
& \text{in the boundary-matching Dirichlet setting},
\end{cases}
\]
then the full expansion satisfies
\[
\|u_\varepsilon-U_k\|_{L^2(\Omega)}
\le
C_{k+1}\varepsilon^{k+1}.
\]
Indeed,
\[
U_{k+1}^*
=
U_k
+
\varepsilon^{k+1}
w_{k+1}^\varepsilon,
\]
and hence the triangle inequality gives
\[
\begin{aligned}
\|u_\varepsilon-U_k\|_{L^2(\Omega)}
&\le
\|u_\varepsilon-U_{k+1}^*\|_{L^2(\Omega)}
+
\varepsilon^{k+1}
\|w_{k+1}^\varepsilon\|_{L^2(\Omega)}
\\
&\le
C_{k+1}\varepsilon^{k+1}.
\end{aligned}
\]
The bound on $w_{k+1}^\varepsilon$ follows, for example, from Lemma~\ref{lem:regularity_hierarchy_final} with $K=k+1$. In the numerical part, this bound can also be checked directly for the computed corrector.
\end{remark}

\begin{remark}[Scope of the estimate]
The estimate in Theorem~\ref{thm:H1_high_order_no_boundary_final} is a boundary-layer-free estimate. Its principal setting is the periodic macroscopic problem. It also applies to an ideal boundary-matching Dirichlet configuration, such as the one-dimensional construction in Remark~\ref{rem:1d_boundary_matching}. For a standard Dirichlet problem, however, $U_k^*$ generally does not satisfy the same boundary condition as $u_\varepsilon$, and the error $u_\varepsilon-U_k^*$ need not belong to $H_0^1(\Omega)$. In that case, boundary layer correctors are necessary. The present result isolates the algebraic high-order convergence mechanism of the recursive expansion and provides the theoretical foundation for the high-accuracy numerical construction of the correctors.
\end{remark}

\section{Tensor Neural Network Method}
\label{Section_Method}

In this section, we describe the TNN discretization used to compute the high-order hierarchy in Section~\ref{Section_Expansion}. The method is based on strong-form least-squares residuals, deterministic tensorized quadrature, and a sequential training chain for the correctors
\[
\chi_i\ (i=1,\ldots,d),\quad u_0,\quad \hat u_1,\quad \hat u_2,\quad
\tilde u_1,\quad \delta u_2,\quad \hat u_3,\quad \tilde u_2 .
\]
The same construction applies to higher orders, but the numerical tests in Section~\ref{Section_Numerical} stop at the third-order partial expansion.

\subsection{Brief Review of TNN Architecture}

The TNN architecture is built by tensor products of one-dimensional subnetworks. Let $z=(z_1,\ldots,z_D)$ denote either a macroscopic variable $x\in\Omega$ with $D=d$, or a two-scale variable $(x,y)\in\Omega\times Y$ with $D=2d$. A rank-$p$ TNN trial function is written as
\begin{equation}
\label{def_TNN_normed}
\Psi(z;\alpha,\theta)
=
\sum_{\ell=1}^p
\alpha_\ell
\prod_{m=1}^D
\widehat\phi_{m,\ell}(z_m;\theta_m).
\end{equation}
Here $\alpha_\ell$ are scaling coefficients and the one-dimensional factors $\widehat\phi_{m,\ell}$ are normalized in their one-dimensional $L^2$ norms. For two-scale functions, we use separate one-dimensional subnetworks for the slow variables and the fast variables. All subnetworks in the reported experiments use the sine activation function, and derivatives are evaluated by automatic differentiation.

Periodicity is imposed as a hard architectural constraint, following \cite{LinLiuXieTNNMultiscale}. For every coordinate $z_m\in(0,1)$ that is required to be periodic, the first hidden layer uses
\[
\sin\bigl(2\pi k_{m,r}z_m+b_{m,r}\bigr),
\qquad k_{m,r}\in\mathbb Z,
\]
where the integers $k_{m,r}$ are fixed while the biases and all subsequent network parameters are trainable. The output of the complete subnetwork is therefore one-periodic in $z_m$. For a two-scale trial function, the reference-point gauge used in the experiments is enforced by
\begin{equation}
\Psi^{\circ}(x,y;\theta)
=\Psi(x,y;\theta)-\Psi(x,y^{\circ};\theta),
\qquad y^{\circ}=0,
\label{eq:tnn-point-gauge}
\end{equation}
which preserves fast-variable periodicity and gives $\Psi^{\circ}(x,y^{\circ};\theta)=0$. The zero-mean gauge used in the analysis can instead be imposed by subtracting $\int_Y\Psi(x,y;\theta)\,dy$; the two gauges differ by an additive function of $x$, which is absorbed consistently into the macroscopic corrector. For a macroscopic periodic trial function, subtracting its value at $x^{\circ}=0$ fixes the additive constant. In the one-dimensional Dirichlet experiment, homogeneous endpoint values are imposed by the hard transformation
\[
\Phi_D(x;\theta)=x(1-x)\Phi(x;\theta).
\]
Thus neither periodicity nor the chosen normalization is imposed through a penalty term.

Theoretical approximation properties of TNNs and the associated tensorized quadrature formulas have been studied in \cite{WangJinXieTNN, WangLinLiaoLiuXie, LinLiuXieTNNMultiscale}. In the present work, the important practical point is that the tensor-product format in \eqref{def_TNN_normed} turns the high-dimensional inner products appearing in the loss functions into contractions of one-dimensional quadrature matrices.

\subsection{Strong-Form Least Squares and Scaling Coefficients}
\label{subsec:strong_form_alpha}

All corrector equations in the numerical chain are linear after the right-hand side has been assembled from previously computed terms. Let a generic equation be written as
\[
\mathcal P v = r,
\]
where $\mathcal P$ is either the cell operator in $y$ or the macroscopic homogenized operator in $x$. For a fixed TNN basis
\[
v_\theta(z)=\sum_{\ell=1}^p\alpha_\ell\varphi_\ell(z;\theta),
\]
define
\[
\Phi_\ell=\mathcal P\varphi_\ell,\qquad
M_{\ell m}=\langle \Phi_\ell,\Phi_m\rangle,\qquad
b_\ell=\langle \Phi_\ell,r\rangle,\qquad
\rho=\langle r,r\rangle .
\]
Then the least-squares residual for the scaling vector $\alpha$ is
\begin{equation}
\label{eq:alpha_least_squares}
\mathcal J(\alpha,\theta)
=
\alpha^TM\alpha-2\alpha^Tb+\rho.
\end{equation}
For fixed basis parameters $\theta$, if $M$ is positive definite, the scaling vector $\alpha$ can be eliminated analytically:
\begin{equation}
\label{eq:alpha_elimination}
\alpha=M^{-1}b,\qquad
\mathcal J_{\mathrm{red}}(\theta)
=
\rho-b^TM^{-1}b.
\end{equation}
If $M$ is only positive semidefinite, the gauge and boundary constraints must
first be imposed on the trial space; the remaining least-squares problem is
then solved with a rank-revealing factorization or the Moore--Penrose
pseudoinverse, in which case $M^{-1}$ in \eqref{eq:alpha_elimination} is
replaced by $M^\dagger$.
This elimination is useful because the nonlinear optimizer only updates the one-dimensional subnetworks. In the implementation, some routines store the vector with the opposite sign convention for the right-hand side; this gives the equivalent formula
\[
\mathcal J_{\mathrm{red}}
=
\rho+\sum_\ell(-\alpha_\ell \widetilde b_\ell).
\]
The optimizer minimizes this squared residual objective without taking a
square root, and the values reported in the network tables are final values of
$\mathcal J$ in \eqref{eq:alpha_least_squares}.
All correctors in the reported experiments use this least-squares solve for
$\alpha$; in particular, $\alpha$ is not trained by a separate strong-alpha
loss for $\hat u_3$.

\subsection{TNN Method for High-Order Asymptotic Expansion}

Let
\[
\mathcal L_y v=-\nabla_y\cdot(A\nabla_yv),
\qquad
\mathcal L_x v=-\nabla_x\cdot(A^*\nabla_xv).
\]
The tensorized numerical implementation assumes that every coefficient entry
has a finite or accurately truncated representation of the form
\begin{equation}
A_{ij}(z)=\sum_{q=1}^{r_A}\prod_{m=1}^{D}a_{ij,q,m}(z_m),
\qquad z=(x,y),
\label{eq:coefficient-tensor-format}
\end{equation}
and that the source terms assembled along the corrector hierarchy admit the
same type of representation. Under this assumption, $A$, its derivatives,
and all strong-form right-hand sides can be handled by tensor contractions and
automatic differentiation. This is a computational assumption only: the
homogenization analysis in Section~\ref{Section_Expansion} does not require
\eqref{eq:coefficient-tensor-format}. For arbitrary non-tensor data, a
preliminary tensor approximation is required, and its approximation error
would have to be added to the TNN and asymptotic-expansion errors.

For the scalar coefficient used in the numerical tests, the cell residual for $\chi_i$ is evaluated in the expanded form
\begin{equation}
\label{loss_cell_problem}
\mathcal R_{\chi_i}
=
\partial_{y_i}A
+
\nabla_y A\cdot\nabla_y\chi_i
+
A\Delta_y\chi_i,
\qquad i=1,\ldots,d,
\end{equation}
which is equivalent to the cell equation
$-\nabla_y\cdot(A(e_i+\nabla_y\chi_i))=0$.
After $\chi_i$ are obtained, the homogenized tensor is computed by \eqref{eq:Astar_def}.

The homogenized solution $u_0$ and the macroscopic correctors $\tilde u_k$ are represented by single-$x$ TNNs. The cell functions $\chi_i$, the oscillatory correctors $\hat u_j$, and the auxiliary terms $\delta u_j$ are represented by $x/y$-pair TNNs. Although $\hat u_1=\boldsymbol\chi\cdot\nabla_xu_0$ can be assembled explicitly from previously computed factors, it may also be represented by an equivalent cell equation.

Similarly, for $k\ge1$, the shift corrector may be constructed directly as
\begin{equation}
\delta u_{k+1}(x,y)
=
\boldsymbol\chi(x,y)\cdot\nabla_x\tilde u_k(x),
\label{eq:delta_uk_direct_numerical}
\end{equation}
or obtained from the equivalent cell equation
\begin{equation}
-\nabla_y\cdot\left(A(x,y)\nabla_y\delta u_{k+1}(x,y)\right)
=
\nabla_y\cdot\left(A(x,y)\nabla_x\tilde u_k(x)\right).
\label{eq:delta_uk_cell_numerical}
\end{equation}
For a scalar coefficient, the right-hand side of
\eqref{eq:delta_uk_cell_numerical} reduces to
$\nabla_yA\cdot\nabla_x\tilde u_k$. The equivalence follows directly from
the first-order cell problems for $\boldsymbol\chi$. The cell-equation
formulation provides a separate TNN representation when derivatives of
$\delta u_{k+1}$ are needed in later right-hand sides.
In the reported two-dimensional experiment, $\delta u_2$ is obtained from
the cell-equation formulation \eqref{eq:delta_uk_cell_numerical}; this gives
a lower-rank representation and reduces the cost of assembling the subsequent
corrector equations.

The quantities are computed sequentially in the order
\[
\chi_1,\ \chi_2,\ u_0,\ \hat u_1,\ \hat u_2,\ \tilde u_1,\
\delta u_2,\ \hat u_3,\ \tilde u_2.
\]
The final network architecture is selected separately for each governing
equation. The architectures and the corresponding final objective values
$\mathcal J$ used for the reported two-dimensional experiment are given in
Table~\ref{tab:2d-network-selection}.

\subsection{High-Dimensional Quadrature Scheme}
\label{Section_Integration}

Evaluating \eqref{eq:alpha_least_squares} requires many inner products over either $\Omega$ or $\Omega\times Y$. We use deterministic tensorized quadrature throughout. If two TNN functions are written in the form \eqref{def_TNN_normed}, their product integral is reduced to sums of products of one-dimensional integrals. The same principle is used for derivatives of the TNN functions and for tensor-product representations of the coefficient and its derivatives.

This deterministic integration is important in the high-order hierarchy: the right-hand side of each new corrector contains derivatives and averages of previously trained correctors, so stochastic quadrature noise would be propagated and amplified through the chain. In both the one- and two-dimensional experiments, every one-dimensional coordinate interval is divided into 200 uniform subintervals, with four Gauss points on each subinterval. This $200\times4$ rule is used to evaluate the TNN training losses and the corrector quantities in every coordinate direction.

Except for the additional initialization stage of $u_0$ described below,
every learned component is trained with 50,000 Adam iterations \cite{KingmaBaAdam}
at an initial learning rate of $0.01$, followed by 5,000 L-BFGS steps
\cite{LiuNocedalLBFGS}. For $u_0$, we first perform 20,000 Adam iterations
using the weak energy
\begin{equation}
\mathcal E_0(u_{0,\theta})
=\frac12\int_\Omega A^*(x)\nabla u_{0,\theta}\cdot
\nabla u_{0,\theta}\,dx
-\int_\Omega f(x)u_{0,\theta}(x)\,dx,
\label{eq:u0-weak-energy}
\end{equation}
then 50,000 Adam iterations using the squared strong-form residual, and
finally 5,000 L-BFGS steps using the same squared strong-form residual. During
the weak stage, the scaling vector $\alpha$ is eliminated through the
stationarity system of \eqref{eq:u0-weak-energy}; during every strong stage it
is obtained from \eqref{eq:alpha_elimination}. All experiments
use the sine activation function. The quadrature data denoted by
\texttt{quad} in the OpenPFEM output are used only in the final
post-processing of the FEM/TNN errors; they are not used for TNN training or
for the loss values reported for the correctors.

\section{Numerical Results}
\label{Section_Numerical}

In this section, we provide several examples to validate the accuracy of the proposed
TNN-based machine learning method for the locally periodic problems. All examples use
the common quadrature, activation, and optimization settings specified in
Subsection~\ref{Section_Integration}. The proposed algorithm is implemented within
the PyTorch framework \cite{PaszkePyTorch}.

To evaluate the numerical accuracy of approximate solutions, the multiscale
problem is also solved directly by the finite element method. The reference
solution $u_{\varepsilon}^{\text{FEM}}$ is computed with the parallel finite
element platform OpenPFEM. The element order, reference mesh, quadrature rule,
and sparse direct solver used for each reported benchmark are specified below.
The same fixed reference discretization is used for all values of
$\varepsilon$ within a given benchmark.

Unless a star is displayed explicitly, the $L^2$ error tables use the full
expansions $U_k$, whereas the $H^1$ semi-norm tables use the partial expansions
$U_k^*$. Their difference is precisely the highest-order macroscopic term,
\begin{equation}
U_k-U_k^*=\varepsilon^k\tilde u_k(x).
\label{eq:full-partial-numerical-difference}
\end{equation}
Because $\tilde u_k$ is independent of the fast variable, differentiating
this difference produces no factor $\varepsilon^{-1}$; its $H^1$ contribution
therefore enters at the higher order $\varepsilon^k$ under the regularity
assumptions of Section~\ref{Section_Expansion}. At third order, where
$\tilde u_3$ was not trained, the corresponding $L^2$ column is explicitly
labelled $U_3^*$ rather than $U_3$.

All $H^1$ semi-norm errors are evaluated using the physical gradient after
composition with $y=x/\varepsilon$. In particular, the chain rule gives
\begin{align*}
\nabla U_1^*
&=\nabla_xu_0+(\nabla_yu_1^*)^\varepsilon
+\varepsilon(\nabla_xu_1^*)^\varepsilon,\\
\nabla U_2^*
&=\nabla_xu_0+(\nabla_yu_1^*)^\varepsilon
+\varepsilon\bigl(\nabla_xu_1+\nabla_yu_2^*\bigr)^\varepsilon
+\varepsilon^2(\nabla_xu_2^*)^\varepsilon.
\end{align*}
These complete physical gradients, including the highest displayed
$x$-derivative terms, are used in the post-processing below.

The interpretation of the two error measures is slightly different. The
$H^1$ semi-norm is invariant under an additive change of representative and
can therefore be compared directly with the boundary-layer-free estimate in
Theorem~\ref{thm:H1_high_order_no_boundary_final}. The point-normalized
$L^2$ errors compare consistently chosen FEM and TNN representatives, but the
$L^2$ norm is not gauge invariant. Consequently, their observed slopes are
reported as empirical evidence consistent with the formal expansion orders,
not as a direct consequence of the gauge-invariant $H^1$ theorem; see
Remark~\ref{rem:gauge-invariance-point-normalization}.

\subsection{One-Dimensional Elliptic Two-Scale Dirichlet Problem}
\label{sec:1d-dirichlet}
In this subsection, we solve a one-dimensional two-scale problem (i.e., $d=1$)
with the TNN-based machine learning method.
Let $\Omega = (0,1)$ with homogeneous 
Dirichlet boundary conditions.
We consider the following problem:
\begin{equation}
\begin{cases}
-\dfrac{\partial}{\partial x}
\left(A\left(x,\dfrac{x}{\varepsilon}\right)
\dfrac{\partial u_{\varepsilon}}{\partial x}\right)=F(x),
& x\in\Omega,\\[2mm]
u_{\varepsilon}(0)=u_{\varepsilon}(1)=0.
\end{cases}
\label{ex_1D}
\end{equation}
The coefficient and source term are chosen as
\begin{equation}
A(x,y) = \sin(2\pi y) + \sin(x) + 2, \qquad F(x) = 3x.
\label{eq:1d-coefficient}
\end{equation}
Here the coefficient depends on both the slow variable $x$ and the fast variable
$y=x/\varepsilon$.

This Dirichlet experiment is the special one-dimensional boundary-matching
configuration described in Remark~\ref{rem:1d_boundary_matching}, rather than
a generic Dirichlet problem covered without a boundary layer. The auxiliary
equations in $y$ use the endpoint-normalized gauge (equivalently, the
periodic gauge with value zero at $y=0$), while the macroscopic components use
homogeneous Dirichlet conditions at $x=0$ and $x=1$. All tested scales have
the form $\varepsilon=1/N$. Hence the fast phases at the two physical
endpoints are $0$ and $N$, and every truncated expansion matches the boundary
values of $u_\varepsilon$ exactly. Notice that this statement concerns the
fast-variable auxiliary equations; the macroscopic Dirichlet problem here is
not equivalent to a periodic problem, since, for example,
$\int_0^1F(x)\,dx=3/2\ne0$.

To verify the convergence results, we use the notation
\[
U_0=u_0,
\qquad
U_1=u_0+\varepsilon u_1,
\qquad
U_2=U_1+\varepsilon^2u_2,
\qquad
U_3^*=U_2+\varepsilon^3u_3^*.
\]
Here $U_3^*$ is the third-order partial expansion defined in
\eqref{eq:Uk_star_final}; the reported training chain does not include the
macroscopic term $\tilde u_3$ required for the full expansion $U_3$.
The reference solution $u_\varepsilon^{\text{FEM}}$ is computed with
continuous piecewise-quadratic elements on $65{,}536$ uniform subintervals,
so that $h=2^{-16}$ and the unconstrained quadratic space has $131{,}073$
nodal degrees of freedom. Element assembly and error post-processing both use
the 16-point Gauss rule on every subinterval. The resulting sparse linear
systems are solved on four MPI ranks through PETSc with the SuperLU direct
solver. We report the eight compatible scales
$\varepsilon \in \{1/3, 1/5, 1/8, 1/10, 1/12, 1/15, 1/20, 1/30\}$.
At the finest reported scale, $\varepsilon=1/30$, the reference mesh contains
approximately $65{,}536/30\approx2{,}185$ elements per microscopic period.

We now examine the convergence behavior of the proposed TNN-based asymptotic 
expansion. For a fixed small scale $\varepsilon = 1/30$, Figure~\ref{u_fem_vs_tnn_eps30_comparison}(a) compares the
reference FEM solution $u_\varepsilon^{\text{FEM}}$ with the successive approximations 
$U_0$, $U_1$, $U_2$, and $U_3^*$.
As expected, the zeroth-order term $u_0$ correctly captures the global 
envelope but misses the microscopic oscillations that vary on the 
scale $\mathcal{O}(\varepsilon)$. 
The error decomposition is visualized in panels (b)--(d). Panel (b) shows that the amplitude of $u_{\varepsilon}^{\text{FEM}}-U_0$ is relatively large. Once the first-order correction term is introduced, these oscillations are reduced, and panel (d) shows a further reduction for $U_2$ and the third-order partial expansion $U_3^*$.

\begin{figure}[htbp]
  \centering
  \includegraphics[width=0.95\textwidth]{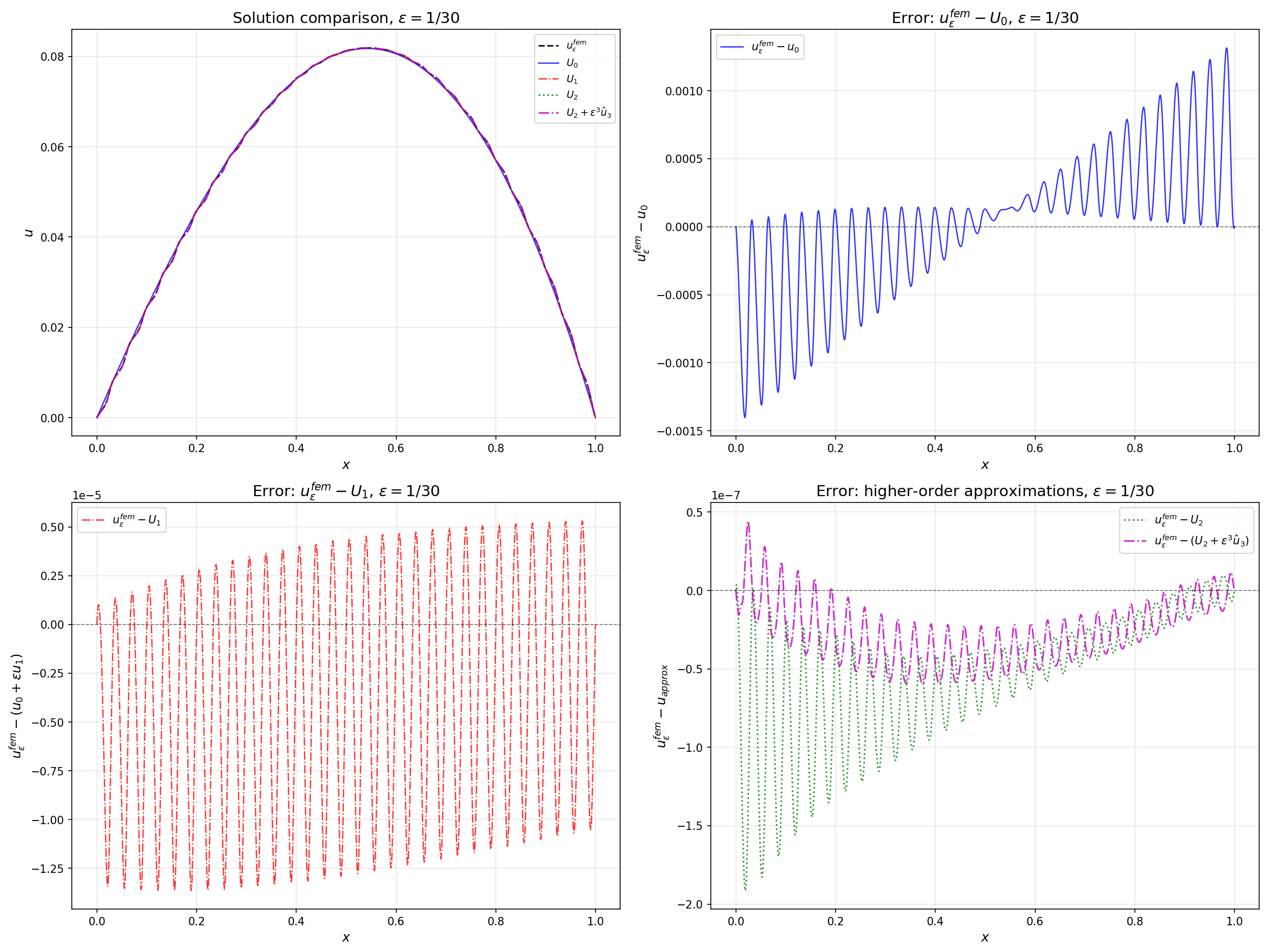}
  \caption{Comparison of the TNN approximation and finite-element reference solution at $\varepsilon=1/30$.
  (a) Solution comparison: the FEM reference solution $u_{\varepsilon}^{\text{FEM}}$, together with the asymptotic expansions
  $U_0$, $U_1$, $U_2$, and the third-order partial expansion $U_3^*$.
  (b) Error of the zeroth-order approximation: $u_{\varepsilon}^{\text{FEM}}-U_0$.
  (c) Error of the first-order approximation: $u_{\varepsilon}^{\text{FEM}}-U_1$.
  (d) Errors of the second- and third-order partial approximations:
  $u_{\varepsilon}^{\text{FEM}}-U_2$ (green) and
  $u_{\varepsilon}^{\text{FEM}}-U_3^*$ (magenta).}
  \label{u_fem_vs_tnn_eps30_comparison}
\end{figure}

Table \ref{tab:1d-L2-errors} reports the $L^2$ errors for the zeroth-order 
approximation $u_0$, the first-order approximation $u_0 + \varepsilon u_1$, 
and the second-order approximation $u_0 + \varepsilon u_1 + \varepsilon^2 u_2$.
Table \ref{tab:1d-H1-errors} shows the corresponding errors in the $H^1$ 
semi-norm. 

\begin{table}[htbp]
\caption{One-dimensional Dirichlet problem: $L^2$ errors for successive 
approximations.}
\label{tab:1d-L2-errors}
\centering
\begin{tabular}{c|ccc}
\hline
$\varepsilon$ & $\|u_\varepsilon^{\text{FEM}} - u_0\|_{L^2}$ & 
$\|u_\varepsilon^{\text{FEM}} - (u_0+\varepsilon  u_1)\|_{L^2}$ & 
$\|u_\varepsilon^{\text{FEM}} - (u_0 + \varepsilon  u_1 + \varepsilon^2  u_2) \|_{L^2}$ \\ \hline
$1/3$   & $4.874079\times10^{-3}$ & $7.277388\times10^{-4}$ & $5.497528\times10^{-5}$ \\
$1/5$   & $2.984178\times10^{-3}$ & $2.579025\times10^{-4}$ & $1.264925\times10^{-5}$ \\
$1/8$   & $1.873647\times10^{-3}$ & $9.966410\times10^{-5}$ & $3.160263\times10^{-6}$ \\
$1/10$  & $1.499790\times10^{-3}$ & $6.354273\times10^{-5}$ & $1.627626\times10^{-6}$ \\
$1/12$  & $1.250000\times10^{-3}$ & $4.4013699\times10^{-5}$ & $9.461347\times10^{-7}$ \\
$1/15$  & $9.999468\times10^{-4}$ & $2.809605\times10^{-5}$ & $4.873127\times10^{-7}$ \\
$1/20$  & $7.497766\times10^{-4}$ & $1.576447\times10^{-5}$ & $ 2.095098\times10^{-7}$ \\ 
$1/30$  & $4.996343\times10^{-4}$ & $6.990490\times10^{-6}$ & $ 6.697795\times10^{-8}$ \\ \hline
\end{tabular}
\end{table}

\begin{table}[htbp]
\caption{One-dimensional Dirichlet problem: $H^1$ semi-norm errors for 
successive approximations.}
\label{tab:1d-H1-errors}
\centering
\begin{tabular}{c|cc}
\hline
$\varepsilon$ & 
$|u_\varepsilon^{\text{FEM}} - U_1^*|_{H^1}$ & 
$|u_\varepsilon^{\text{FEM}} - U_2^*|_{H^1}$ \\ \hline
$1/3$   & $1.103554\times10^{-2}$ & $5.147922\times10^{-4}$ \\
$1/5$   & $6.731449\times10^{-3}$ & $1.899563\times10^{-4}$ \\
$1/8$   & $4.249789\times10^{-3}$ & $7.478227\times10^{-5}$ \\
$1/10$  & $3.411455\times10^{-3}$ & $4.793118\times10^{-5}$ \\
$1/12$  & $2.849368\times10^{-3}$ & $3.330484\times10^{-5}$ \\
$1/15$  & $2.284698\times10^{-3}$ & $2.132209\times10^{-5}$ \\
$1/20$  & $1.717430\times10^{-3}$ & $1.199216\times10^{-5}$ \\
$1/30$  & $1.147558\times10^{-3}$ & $5.324368\times10^{-6}$ \\ \hline
\end{tabular}
\end{table}

The one-dimensional results demonstrate the asymptotic behavior predicted by
the boundary-layer-free theory in the special matching configuration above.
The zeroth-order approximation $u_0$ achieves approximately first-order
$L^2$ convergence, consistent with the standard homogenization error estimate. 
Adding the first-order corrector $\varepsilon u_1$ improves the $L^2$ convergence rate 
to nearly second order, while the second-order approximation reaches approximately 
third-order $L^2$ convergence. Least-squares fits to the eight reported $H^1$
scales give rates approximately $0.983$ for $U_1^*$ and $1.987$ for $U_2^*$.
These results
confirm that the TNN-based corrector chain faithfully reproduces the
theoretical expansion hierarchy for this one-dimensional exact-matching
Dirichlet problem. They should not be interpreted as a boundary-layer-free
validation for general multidimensional Dirichlet problems.

\begin{figure}[htbp]
\centering
\begin{minipage}{0.48\textwidth}
\centering
\includegraphics[width=\linewidth]{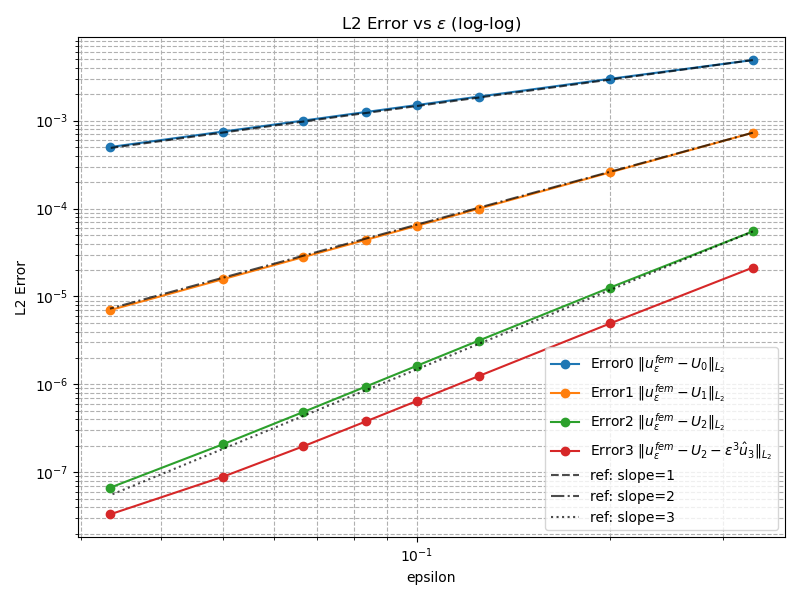}
\end{minipage}
\hfill
\begin{minipage}{0.48\textwidth}
\centering
\includegraphics[width=\linewidth]{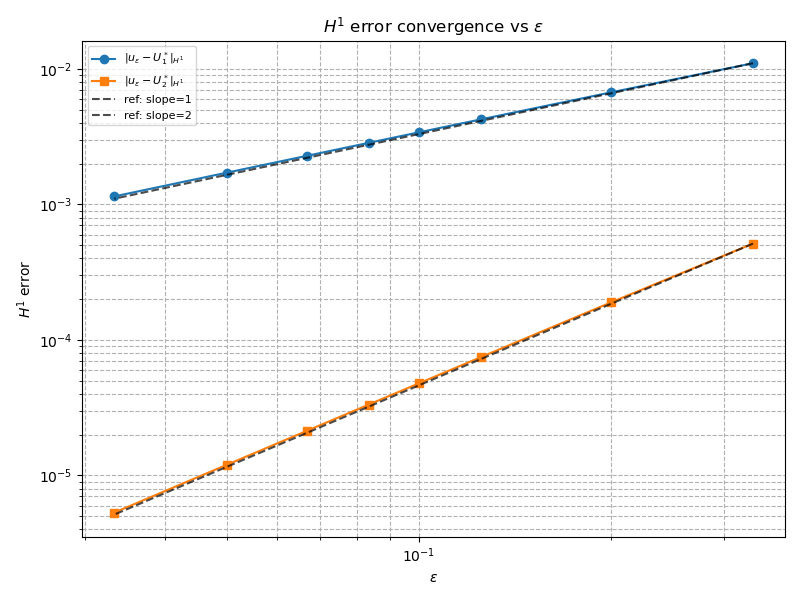}
\end{minipage}
\caption{Left: $L^2$ error of successive approximations versus $\varepsilon$ for 
    the one-dimensional Dirichlet problem. Right: $H^1$ semi-norm error of successive approximations versus 
    $\varepsilon$ for the one-dimensional Dirichlet problem.}
\label{fig:1d-errors}
\end{figure}

\subsection{Two-Dimensional Linear Multiscale Elliptic Problem with a Product-Form Coefficient}
\label{sec:2d-periodic}

In this subsection, we consider a two-dimensional problem ($d=2$) on the
unit square $\Omega=(0,1)^2$. Opposite faces are identified periodically,
and the additive constant is fixed by prescribing the value at the origin.
The model problem is
\begin{equation}
\begin{cases}
-\nabla \cdot \left( A\left(x, \frac{x}{\varepsilon}\right) \nabla u_{\varepsilon} \right) = F(x), & x \in \Omega, \\
u_{\varepsilon} \text{ is periodic across opposite faces of } \partial\Omega, \\
u_{\varepsilon}(0,0)=0.
\end{cases}
\label{ex_2D}
\end{equation}
The scalar diffusion coefficient and source term are chosen as
\begin{equation*}
\begin{aligned}
A(x,y)
&=\bigl(2+\sin(2\pi x_1)\bigr)
  \bigl(2+\sin(2\pi x_2)\bigr) \\
&\quad\times
  \bigl(1+0.5\cos(2\pi y_1)\bigr)
  \bigl(1+0.5\cos(2\pi y_2)\bigr), \\
F(x)&=30\sin(2\pi x_1)\cos(2\pi x_2).
\end{aligned}
\end{equation*}
Here $y=(y_1,y_2)=(x_1/\varepsilon,x_2/\varepsilon)$ denotes the fast
variable. Only the scalar-coefficient case is tested in this experiment; no
matrix-valued diffusion coefficient is used. Since $F$ has zero average over
$\Omega$, the periodic problem is compatible. Periodicity determines its
solution only up to an additive constant, and the condition
$u_\varepsilon(0,0)=0$ selects a unique continuous representative.

The coefficient in this benchmark factorizes as $A(x,y)=b(x)c(y)$.
Consequently, the factor $b(x)$ cancels from the first-order cell equations,
and the corresponding cell functions may be chosen independently of $x$.
Thus this experiment tests the recursive solver and tensorized quadrature for
a single product-form coefficient; a coefficient that is not factorizable as
one product $b(x)c(y)$ is tested separately in
Subsection~\ref{sec:2d-nonproduct}.

The OpenPFEM reference solution is computed with the same periodic boundary
condition and the same pointwise normalization. The additive constants in the
macroscopic terms and correctors are fixed consistently so that every reported
truncated multiscale expansion is also periodic and has value zero at the
origin. Thus the FEM solution and all TNN approximations are compared using
the same representative. The theoretical analysis is written using the
zero-mean periodic gauge; for these smooth solutions, replacing it by the
pointwise gauge changes only additive constants. As explained in
Remark~\ref{rem:gauge-invariance-point-normalization}, this leaves the residual
equations, gradients, and $H^1$ semi-norm errors exactly unchanged. The
reported $L^2$ errors are computed after applying the same pointwise
normalization to the FEM and TNN representatives. This periodic setting avoids
the boundary layers inherent in general multidimensional Dirichlet problems
and allows us to focus on the high-order convergence of the asymptotic
expansion.

An important practical consideration in the numerical implementation is that the correctors in the hierarchy are solved sequentially, and the accuracy of later correctors depends critically on the accuracy of earlier ones. Therefore, for each quantity in the expansion, we test multiple network architectures and select the one that yields the smallest loss function value. This implies that the network parameters differ for different governing equations to be solved. Table~\ref{tab:2d-network-selection} gives the final architecture and final objective value for every trained quantity in the two-dimensional experiment. A bracketed list records the number of neurons layer by layer, from the input layer to the output layer; thus the first entry is the input width, the second entry is the width of the first hidden layer, the intermediate entries are the remaining hidden-layer widths in order, and the last entry is the output width, namely the TNN rank $p$.

Each number in the third column is the final value of the loss functional
$\mathcal J$ used for the corresponding equation. For the grouped entry
$(\chi_1,\chi_2)$, the reported value is
\[
\max\{\mathcal J_{\chi_1},\mathcal J_{\chi_2}\}
=3.318646\times10^{-6}.
\]

To quantify the approximation errors of our TNN-based solver, we compute
reference solutions $u_\varepsilon^{\text{FEM}}$ with continuous
piecewise-quadratic triangular elements. Starting from the two-triangle
partition of the unit square, eight uniform refinements give $131{,}072$
triangles, characteristic mesh size $h=2^{-8}$, and $263{,}169$ quadratic
nodal degrees of freedom before imposing the periodic constraints. Opposite
boundary degrees of freedom are identified with coordinate tolerance
$10^{-10}$, and the lower-left degree of freedom is fixed to zero to remove
the constant nullspace. Assembly and error post-processing use the 36-point
triangle quadrature rule. The linear systems are solved on four MPI ranks
through PETSc with the MUMPS sparse direct solver. At the finest reported
scale, $\varepsilon=1/15$, there are approximately $256/15\approx17$ mesh
intervals per microscopic period in each coordinate direction. The numerical
experiments are performed on six scales with
\[
\varepsilon \in \{1/3,1/5,1/8,1/10,1/12,1/15\},
\]
for which both the $L^2$ norm errors and the $H^1$ semi-norm errors are reported.
The two-dimensional $L^2$ and $H^1$ error data and the corresponding plots are generated from the final FEM/TNN outputs by the post-processing scripts \texttt{output\_error\_L2.py} and \texttt{output\_error\_H1.py}, respectively. The OpenPFEM \texttt{quad} data are used only at this post-processing stage.

\begin{table}[htbp]
\caption{Two-dimensional periodic problem: final network architectures and training objective values for the corrector hierarchy.}
\label{tab:2d-network-selection}
\centering
\small
\setlength{\tabcolsep}{4pt}
\begin{tabular}{c|c|c|p{0.32\textwidth}}
\hline
Quantity & Final architecture & Final $\mathcal J$ & Notes \\
\hline
$\chi_1, \chi_2$ & $[1,2,50,50,50,20]$ & $3.318646 \times 10^{-6}$ & Maximum of the two cell-problem objectives \\
$u_0$ & $[1,2,50,50,50,20]$ &  $8.677601 \times 10^{-5}$ & Homogenized solution \\
$\hat{u}_1$ & $[1,5,80,80,80,20]$ &  $2.453737 \times 10^{-4}$ & First-order oscillatory corrector \\
$\hat{u}_2$ & $[1,8,100,100,100,20]$ &  $3.682750 \times 10^{-3}$ & Second-order oscillatory corrector \\
$\tilde{u}_1$ & $[1,8,100,100,100,20]$ & $7.499653 \times 10^{-7}$ &  First-order macroscopic corrector \\
$\delta u_2$ & $[1,8,100,100,100,20]$ &  $7.586282 \times 10^{-6}$ & Cell-equation shift corrector \\
$\hat{u}_3$ & $[1,8,150,150,150,20]$ & $1.204352 \times 10^{-2}$ & Third-order oscillatory corrector \\
$\tilde{u}_2$ & $[1,8,150,150,150,20]$ &  $6.786468 \times 10^{-3}$ & Second-order macroscopic corrector \\
\hline
\end{tabular}
\end{table}

The corrector hierarchy for the two-dimensional periodic problem follows the same structure as in the one-dimensional case. Specifically, the zeroth-order homogenized equation yields $u_0$, the first-order approximation is
\[
U_1=u_0+\varepsilon(\hat u_1+\tilde u_1),
\]
and the second-order approximation is
\[
U_2=U_1+\varepsilon^2(\hat u_2+\delta u_2+\tilde u_2).
\]
The additional complexity in the two-dimensional setting arises from the fact that the coefficient $A(x,y)$ depends on both fast variables $y_1$ and $y_2$, and the cell problems must be solved on the two-dimensional torus $Y = (0,1)^2$. The periodic boundary conditions on $\partial\Omega$ eliminate boundary layer effects, allowing us to examine the high-order convergence in the interior of the domain.

Figure \ref{fig:eps10_2x2} illustrates the error decomposition for the
two-dimensional problem at $\varepsilon=1/10$. Panel (a) shows a filled-contour
map of the second-order TNN approximation $U_2$,
which already exhibits the oscillatory fine-scale structure characteristic of the 
heterogeneous medium. Panels (b)--(d) display the spatial distribution of the 
approximation errors at different expansion orders.

\begin{figure}[htbp]
\centering
\includegraphics[width=0.98\linewidth]{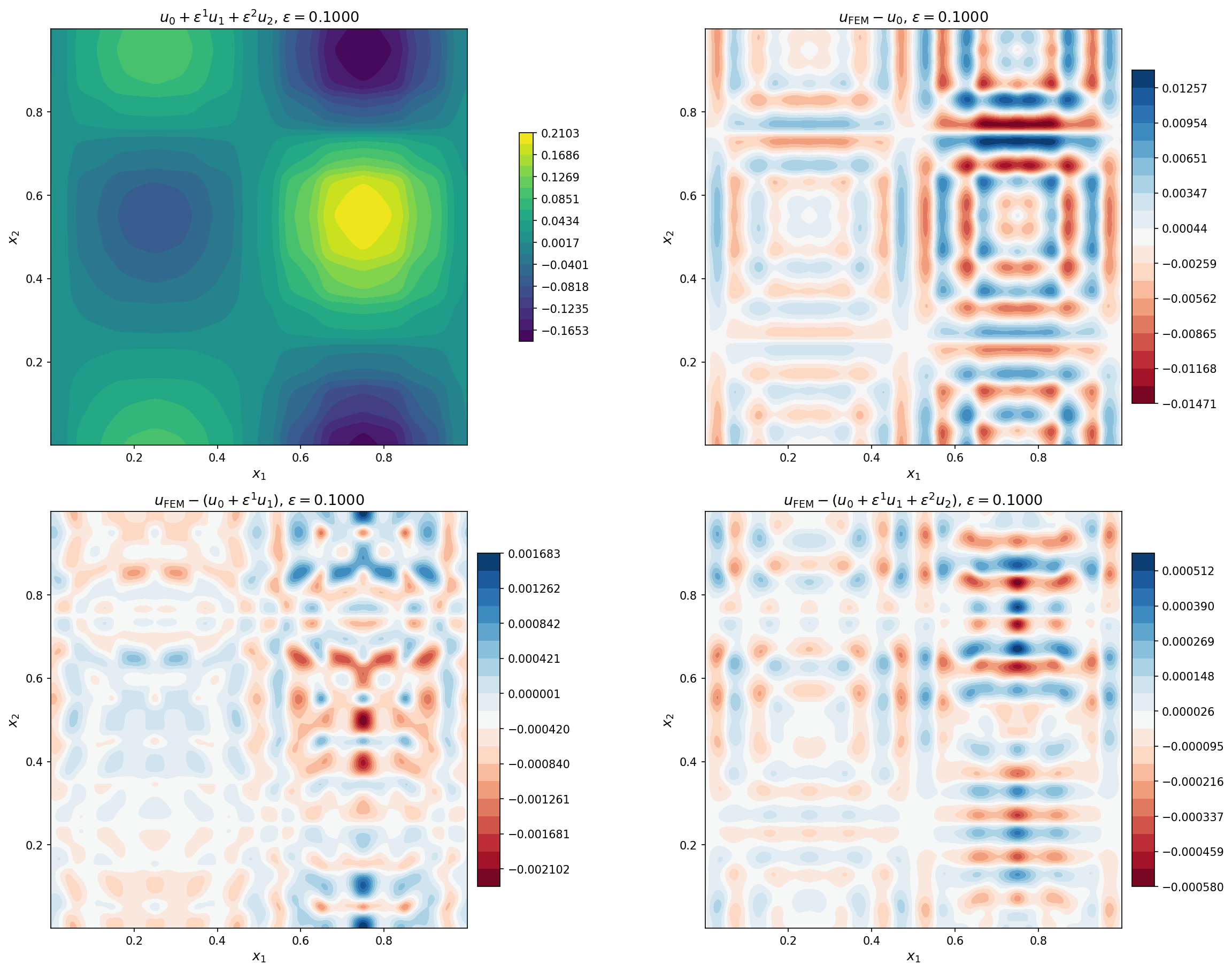}
\caption{Two-dimensional periodic problem at $\varepsilon=1/10$:
(a) filled-contour map of $U_2$;
(b) $u_{\varepsilon}^{\text{FEM}} - u_0$; 
(c) $u_{\varepsilon}^{\text{FEM}}-U_1$;
(d) $u_{\varepsilon}^{\text{FEM}}-U_2$.}
\label{fig:eps10_2x2}
\end{figure}

Tables~\ref{tab:2d-L2-errors} and~\ref{tab:2d-H1-errors} report the $L^2$ errors and $H^1$ semi-norm errors, respectively, for the successive approximations above. Additionally, we present the third-order partial $L^2$ approximation error for completeness.

\begin{table}[htbp]
\caption{Two-dimensional periodic problem: $L^2$ errors for successive approximations.}
\label{tab:2d-L2-errors}
\centering
\begin{tabular}{c|cccc}
\hline
$\varepsilon$ & $\|u_\varepsilon^{\text{FEM}} - u_0\|_{L^2}$ & 
$\|u_\varepsilon^{\text{FEM}} - U_1\|_{L^2}$ & 
$\|u_\varepsilon^{\text{FEM}} - U_2\|_{L^2}$ &
$\|u_\varepsilon^{\text{FEM}} - U_3^{*}\|_{L^2}$ \\ \hline
$1/3$   & $1.129247\times10^{-2}$ & $6.942862\times10^{-3}$ & $4.561362\times10^{-3}$ & $2.512947\times10^{-3}$ \\
$1/5$   & $7.518962\times10^{-3}$ & $2.087626\times10^{-3}$ & $9.322512\times10^{-4}$ & $3.589551\times10^{-4}$ \\
$1/8$   & $4.851752\times10^{-3}$ & $7.608214\times10^{-4}$ & $2.194178\times10^{-4}$ & $6.718247\times10^{-5}$ \\
$1/10$  & $3.910560\times10^{-3}$ & $4.761196\times10^{-4}$ & $1.108697\times10^{-4}$ & $2.988092\times10^{-5}$ \\
$1/12$  & $3.272324\times10^{-3}$ & $3.269087\times10^{-4}$ & $6.372789\times10^{-5}$ & $1.582566\times10^{-5}$ \\
$1/15$  & $2.626767\times10^{-3}$ & $2.073821\times10^{-4}$ & $3.253216\times10^{-5}$ & $7.487016\times10^{-6}$ \\ \hline
\end{tabular}
\end{table}

\begin{table}[htbp]
\caption{Two-dimensional periodic problem: $H^1$ semi-norm errors for successive approximations.}
\label{tab:2d-H1-errors}
\centering
\begin{tabular}{c|cc}
\hline
$\varepsilon$ & $|u_\varepsilon^{\text{FEM}} - U_1^*|_{H^1}$ & 
$|u_\varepsilon^{\text{FEM}} - U_2^*|_{H^1}$ \\ \hline
$1/3$   & $9.822552\times10^{-2}$ & $8.557545\times10^{-2}$ \\
$1/5$   & $5.001595\times10^{-2}$ & $3.126599\times10^{-2}$ \\
$1/8$   & $2.886039\times10^{-2}$ & $1.239209\times10^{-2}$ \\
$1/10$  & $2.261281\times10^{-2}$ & $8.009763\times10^{-3}$ \\
$1/12$  & $1.864826\times10^{-2}$ & $5.689627\times10^{-3}$ \\
$1/15$  & $1.487345\times10^{-2}$ & $4.117164\times10^{-3}$ \\ \hline
\end{tabular}
\end{table}

In the $H^1$ semi-norm, Table~\ref{tab:2d-H1-errors} shows that the second-order partial approximation $U_2^*$ further reduces the error relative to $U_1^*$. A least-squares fit on the six reported $H^1$ scales gives observed rates approximately $1.172$ for $U_1^*$ and $1.914$ for $U_2^*$.

Figure~\ref{fig:2d-errors} presents the convergence behavior of the TNN approximations in the $L^2$ norm and $H^1$ semi-norm. The $L^2$ data show approximately first-, second-, and third-order convergence for $u_0$, $U_1$, and $U_2$, respectively, while the third-order partial approximation further decreases the error. The $H^1$ data show the expected improvement from $U_1^*$ to $U_2^*$ on the reported scales.

\begin{figure}[htbp]
  \centering
  \begin{minipage}{0.48\textwidth}
    \centering
    \includegraphics[width=\linewidth]{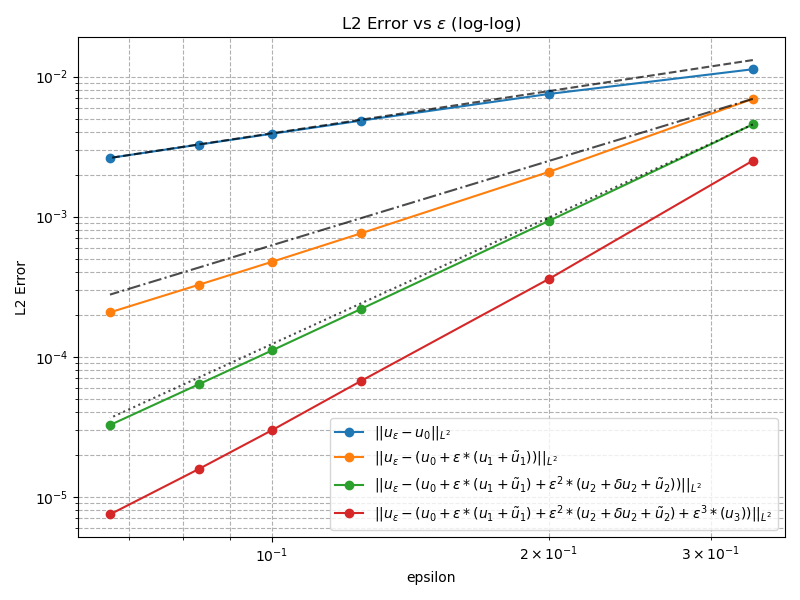}
  \end{minipage}
  \hfill 
  \begin{minipage}{0.48\textwidth}
    \centering
    \includegraphics[width=\linewidth]{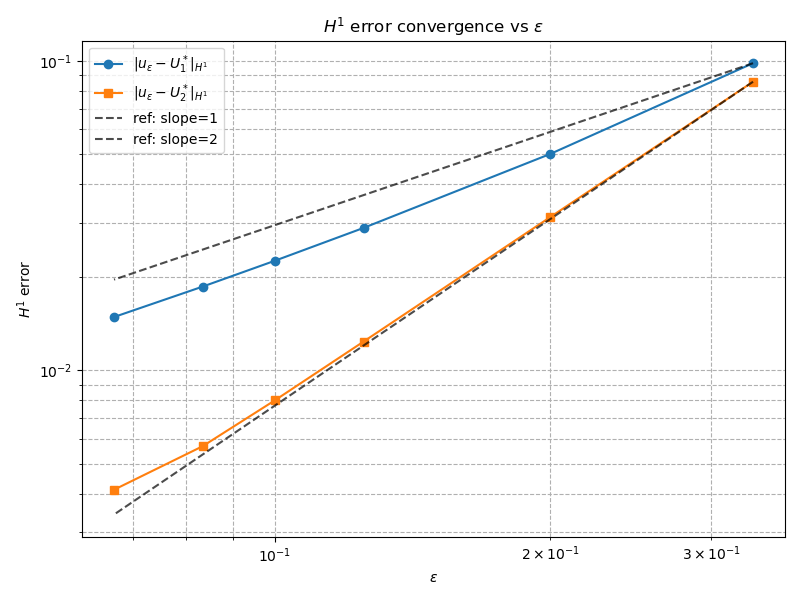}
  \end{minipage}
  \caption{Two-dimensional periodic problem: log--log scale error plots against $\varepsilon$ for successive TNN approximations. Left: $L^2$ error; Right: $H^1$ semi-norm error.}
  \label{fig:2d-errors}
\end{figure}

\subsection{Two-Dimensional Linear Multiscale Elliptic Problem with a Non-Product Coefficient}
\label{sec:2d-nonproduct}

In this subsection we consider a two-dimensional problem ($d=2$) on the unit square
$\Omega=(0,1)^2$ with periodic boundary conditions on opposite faces.
The additive constant is fixed by the value at the origin, i.e., $u_\varepsilon(0,0)=0$.
The governing equation takes the same form as~\eqref{ex_2D}, but the diffusion
coefficient is no longer factorizable as a single product $b(x)c(y)$.
Specifically,
\begin{equation}
\begin{aligned}
A(x,y) &= 3 + \sin(2\pi x_1) + \cos(2\pi x_2) + \sin(2\pi y_1)\sin(2\pi y_2),\\
F(x) &= 30\sin(2\pi x_1) + 20\cos(2\pi x_2),
\end{aligned}
\end{equation}
where $y=(y_1,y_2)=(x_1/\varepsilon,x_2/\varepsilon)$ is the fast variable.
Both $A$ and $F$ are finite sums of tensor-product terms, so they remain
compatible with the tensorized quadrature assumption
\eqref{eq:coefficient-tensor-format}. The relevant distinction from the
previous benchmark is not a lack of tensor structure: because $A$ cannot be
written as one product $b(x)c(y)$, the slow-variable factor does not cancel
from the cell equations, and the cell functions generally depend on $x$. The
factor $\sin(2\pi y_1)\sin(2\pi y_2)$ is itself a rank-one tensor product in
the two fast coordinates.

The zero-mean condition $\int_\Omega F\,\mathrm{d}x=0$ holds, ensuring
compatibility of the periodic problem. The same pointwise normalization
$u_\varepsilon(0,0)=0$ is imposed to select a unique representative. The
reference solution $u_\varepsilon^{\mathrm{FEM}}$ is computed on a sufficiently fine
mesh via OpenPFEM with the same periodic boundary conditions. All TNN approximations
and the FEM solution are compared using this identical representative, so that
only the intrinsic approximation error is measured.

As in the product-form case, the corrector hierarchy for the present problem follows
the structure
\[
U_1 = u_0 + \varepsilon(\hat u_1 + \tilde u_1),\qquad
U_2 = U_1 + \varepsilon^2(\hat u_2 + \delta u_2 + \tilde u_2).
\]
The key difference from the previous example is that the cell functions now
depend on both the slow and fast variables, even though the input data have low
tensor rank. Table~\ref{tab:2dns-network-selection} reports the final network
architectures and the corresponding final loss values for each quantity in the
expansion hierarchy. For the oscillatory correctors $\hat u_2$ and $\hat u_3$,
the loss values are noticeably higher than those in the product-form case,
which reflects the increased difficulty of resolving the slow-dependent cell
hierarchy.

\begin{table}[htbp]
\caption{Two-dimensional non-product problem: final network architectures and training objective values for the corrector hierarchy.}
\label{tab:2dns-network-selection}
\centering
\small
\setlength{\tabcolsep}{4pt}
\begin{tabular}{c|c|c|p{0.32\textwidth}}
\hline
Quantity & Final architecture & Final $\mathcal J$ & Notes \\
\hline
$\chi_1, \chi_2$ & $[1,5,80,80,20]$ & $3.51\times10^{-3}$ & Maximum of the two cell-problem objectives \\
$u_0$ & $[1,5,80,80,80,20]$ & $4.60\times10^{-2}$ & Homogenized solution \\
$\hat u_1$ & $[1,5,80,80,80,20]$ & $4.60\times10^{-2}$ & First-order oscillatory corrector \\
$\hat u_2$ & $[1,5,80,80,40]$ & $5.99\times10^{-2}$ & Second-order oscillatory corrector \\
$\tilde u_1$ & $[1,5,80,80,80,20]$ & $5.22\times10^{-6}$ & First-order macroscopic corrector \\
$\delta u_2$ & $[1,5,80,80,80,20]$ & $7.66\times10^{-5}$ & Cell-equation shift corrector \\
$\hat u_3$ & $[1,5,80,80,40]$ & $2.53\times10^{-1}$ & Third-order oscillatory corrector \\
$\tilde u_2$ & $[1,5,80,80,80,80,25]$ & $7.35\times10^{-2}$ & Second-order macroscopic corrector \\
\hline
\end{tabular}
\end{table}

The $L^2$ errors are reported on six scales,
\[
\varepsilon \in \{1/3,1/5,1/8,1/10,1/12,1/15\},
\]
whereas the $H^1$ semi-norm errors are reported on the five scales
$\{1/3,1/5,1/8,1/10,1/15\}$.

Figure~\ref{fig:2dns-eps10-2x2} displays the spatial distribution of the
approximation errors for the two-dimensional non-product problem at
$\varepsilon=1/10$. Panel (a) shows the second-order TNN approximation $U_2$,
while panels (b)--(d) show the error fields at different truncation orders.
The error maps show the additional features produced by the slow-dependent
cell functions in the corrector components.

\begin{figure}[htbp]
\centering
\includegraphics[width=0.98\linewidth]{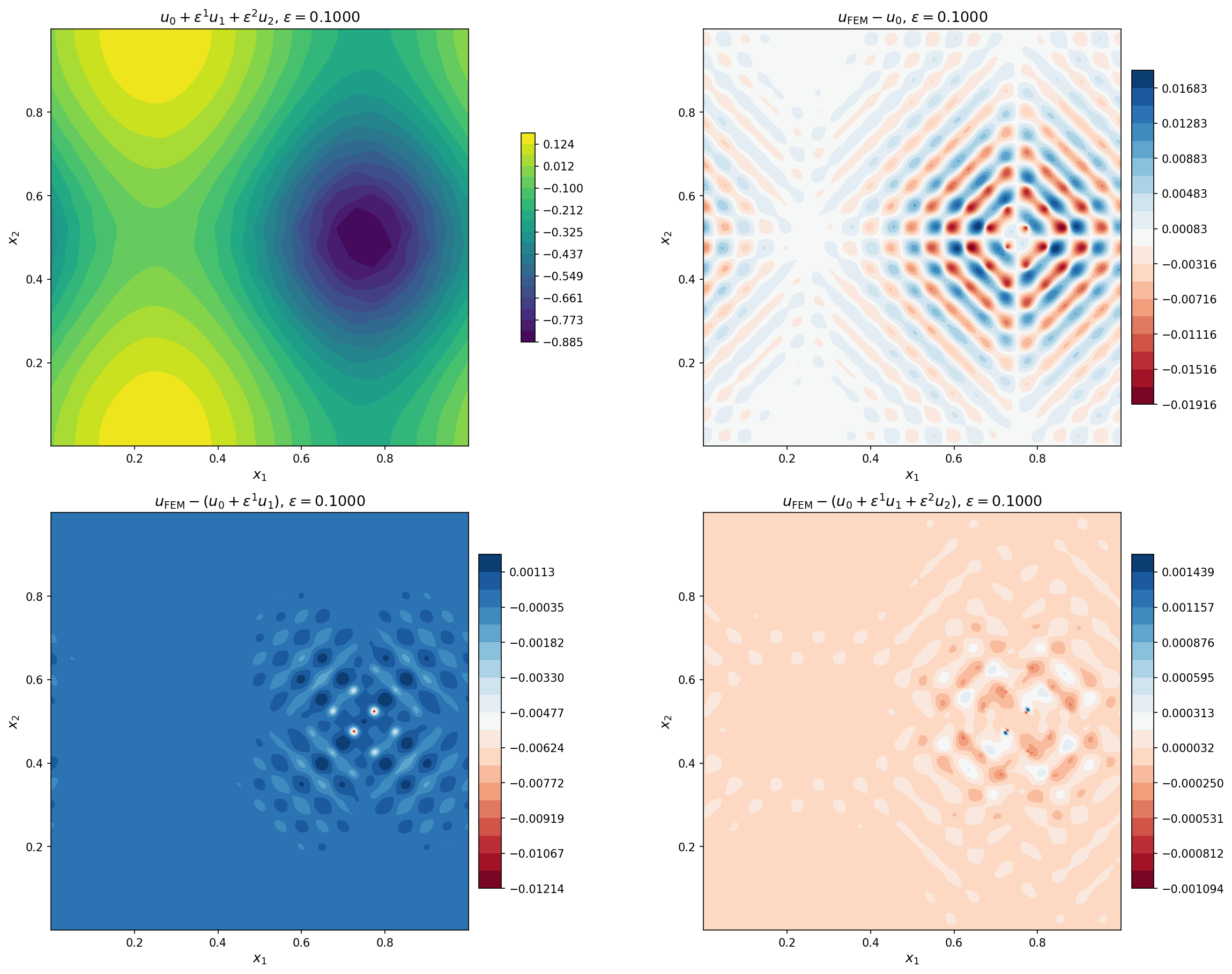}
\caption{Two-dimensional non-product problem at $\varepsilon=1/10$:
(a) second-order TNN approximation $U_2$;
(b) $u_{\varepsilon}^{\text{FEM}}-u_0$;
(c) $u_{\varepsilon}^{\text{FEM}}-U_1$;
(d) $u_{\varepsilon}^{\text{FEM}}-U_2$.}
\label{fig:2dns-eps10-2x2}
\end{figure}

Tables~\ref{tab:2dns-L2-errors} and~\ref{tab:2dns-H1-errors} collect the
$L^2$ errors and $H^1$ semi-norm errors, respectively.

\begin{table}[htbp]
\caption{Two-dimensional non-product problem: $L^2$ errors for successive approximations.}
\label{tab:2dns-L2-errors}
\centering
\begin{tabular}{c|cccc}
\hline
$\varepsilon$ & $\|u_\varepsilon^{\mathrm{FEM}} - u_0\|_{L^2}$ &
$\|u_\varepsilon^{\mathrm{FEM}} - U_1\|_{L^2}$ &
$\|u_\varepsilon^{\mathrm{FEM}} - U_2\|_{L^2}$ &
$\|u_\varepsilon^{\mathrm{FEM}} - U_3^*\|_{L^2}$ \\ \hline
$1/3$   & $1.13\times10^{-2}$ & $3.87\times10^{-3}$ & $1.99\times10^{-3}$ & $1.44\times10^{-3}$ \\
$1/5$   & $6.98\times10^{-3}$ & $1.39\times10^{-3}$ & $4.38\times10^{-4}$ & $2.93\times10^{-4}$ \\
$1/8$   & $4.40\times10^{-3}$ & $5.30\times10^{-4}$ & $1.09\times10^{-4}$ & $8.15\times10^{-5}$ \\
$1/10$  & $3.52\times10^{-3}$ & $3.39\times10^{-4}$ & $6.15\times10^{-5}$ & $4.73\times10^{-5}$ \\
$1/12$  & $2.94\times10^{-3}$ & $2.36\times10^{-4}$ & $4.07\times10^{-5}$ & $3.50\times10^{-5}$ \\
$1/15$  & $2.35\times10^{-3}$ & $1.51\times10^{-4}$ & $2.69\times10^{-5}$ & $2.49\times10^{-5}$ \\ \hline
\end{tabular}
\end{table}

\begin{table}[htbp]
\caption{Two-dimensional non-product problem: $H^1$ semi-norm errors for successive approximations.}
\label{tab:2dns-H1-errors}
\centering
\begin{tabular}{c|cc}
\hline
$\varepsilon$ & $|u_\varepsilon^{\mathrm{FEM}} - U_1^*|_{H^1}$ &
$|u_\varepsilon^{\mathrm{FEM}} - U_2^*|_{H^1}$ \\ \hline
$1/3$   & $5.71\times10^{-1}$ & $5.85\times10^{-2}$ \\
$1/5$   & $3.47\times10^{-1}$ & $2.36\times10^{-2}$ \\
$1/8$   & $2.17\times10^{-1}$ & $1.15\times10^{-2}$ \\
$1/10$  & $1.74\times10^{-1}$ & $9.96\times10^{-3}$ \\
$1/15$  & $1.17\times10^{-1}$ & $1.07\times10^{-2}$ \\ \hline
\end{tabular}
\end{table}

In the $L^2$ norm, a least-squares fit on the six reported scales yields
convergence rates of approximately $0.977$ for $u_0$, $2.02$ for $U_1$, $2.72$
for $U_2$, and $2.56$ for $U_3^*$. The first two rates agree closely with the
nominal orders $1$ and $2$; the two higher-order curves show rates below the
nominal third order on the reported scales. The third-order partial correction
still reduces the error at every listed scale.

In the $H^1$ semi-norm, the first-order partial approximation $U_1^*$ achieves a convergence rate of approximately $0.987$, consistent with the expected first-order rate. For the second-order partial approximation $U_2^*$, the least-squares fit over all five scales gives a rate of approximately $1.13$, which is below the ideal second-order rate; this discrepancy is associated with the finest scale $\varepsilon=1/15$, where the $H^1$ error of $U_2^*$ exhibits a slight increase relative to the value at $\varepsilon=1/10$ (cf. Table~\ref{tab:2dns-H1-errors}). Fitting the first four scales only yields a rate of approximately $1.51$.

Figure~\ref{fig:2d-ex3-errors} presents the convergence behavior of the TNN approximations in the $L^2$ norm and $H^1$ semi-norm for the non-product problem. The $L^2$ data show approximately first-, second-, and third-order convergence for $u_0$, $U_1$, and $U_2$, respectively, while the third-order partial approximation $U_3^*$ further reduces the error. In the $H^1$ semi-norm, $U_1^*$ converges at approximately first order, and $U_2^*$ reduces the error relative to $U_1^*$ on all reported scales, although a deviation from the ideal second-order rate is observed on the finest scale.

\begin{figure}[htbp]
  \centering
  \begin{minipage}{0.48\textwidth}
    \centering
    \includegraphics[width=\linewidth]{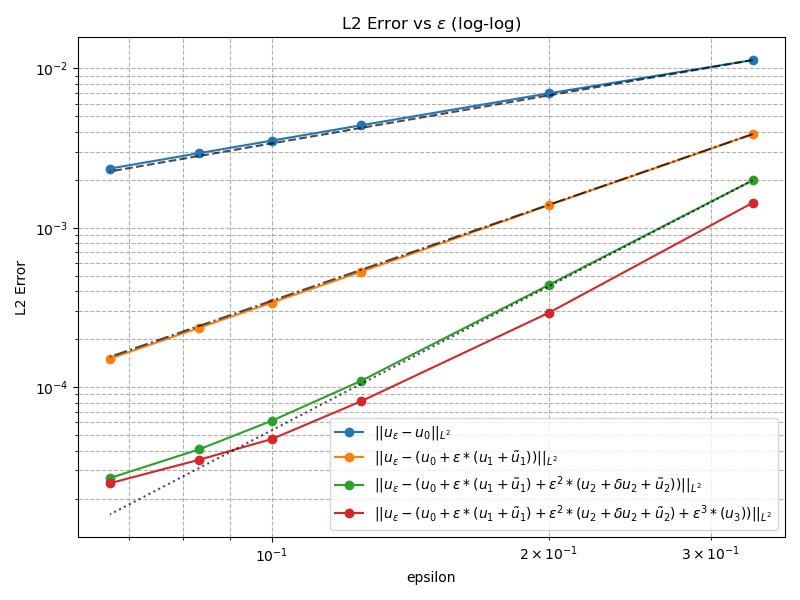}
  \end{minipage}
  \hfill 
  \begin{minipage}{0.48\textwidth}
    \centering
    \includegraphics[width=\linewidth]{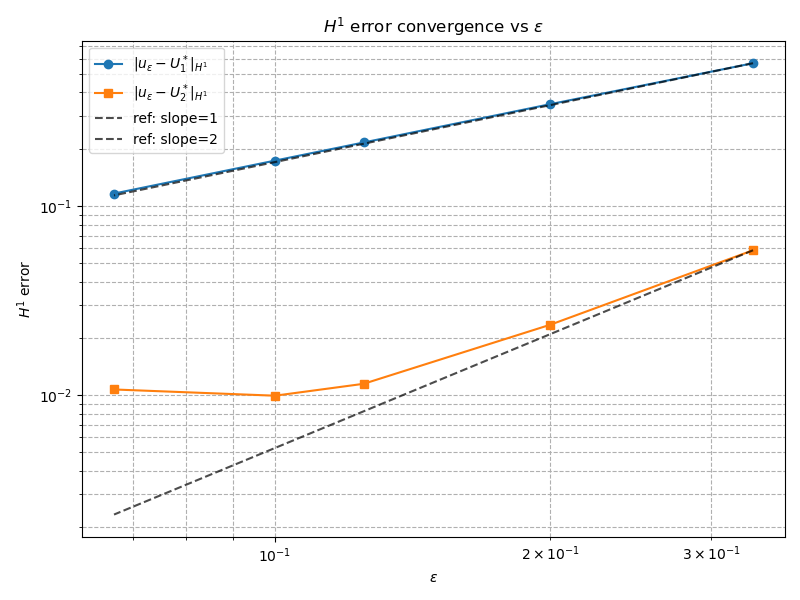}
  \end{minipage}
  \caption{Two-dimensional non-product periodic problem: log--log scale error plots against $\varepsilon$ for successive TNN approximations. Left: $L^2$ error; Right: $H^1$ semi-norm error.}
  \label{fig:2d-ex3-errors}
\end{figure}

\subsection{Three-Dimensional Linear Multiscale Elliptic Problem}
\label{sec:3d}

In this subsection we consider a three-dimensional problem ($d=3$) posed on the unit
cube $\Omega=(0,1)^3$ with periodic boundary conditions across opposite faces.
The additive constant is fixed by prescribing $u_\varepsilon(0,0,0)=0$.
The governing equation is
\begin{equation}
\begin{cases}
-\nabla \cdot \left( A\left(x, \frac{x}{\varepsilon}\right) \nabla u_\varepsilon \right) = F(x),
& x \in \Omega,\\[2mm]
u_\varepsilon \text{ is periodic on } \partial\Omega,
\end{cases}
\label{ex_3D}
\end{equation}
where the diffusion coefficient and source term are given by
\begin{equation}
\begin{aligned}
A(x,y) &= a(x,y)I_3,\\
a(x,y) &= 4+\sin(2\pi x_1)+\sin(2\pi x_2)+\sin(2\pi x_3)
+ \frac12\sin(2\pi y_1)\sin(2\pi y_2)\sin(2\pi y_3),\\[2mm]
F(x) &= 50\,\sin(2\pi x_1)\sin(2\pi x_2)\sin(2\pi x_3),
\end{aligned}
\label{ex_3d-coefficient}
\end{equation}
with $y=(y_1,y_2,y_3)=(x_1/\varepsilon,x_2/\varepsilon,x_3/\varepsilon)$.
The diffusion matrix $A$ is symmetric and uniformly positive definite, since
$a(x,y)\ge 1/2$. The oscillatory part
$0.5\sin(2\pi y_1)\sin(2\pi y_2)\sin(2\pi y_3)$ is a rank-one tensor product
in the fast coordinates. Nevertheless, the parameterized cell problem is
six-dimensional in $(x,y)$, and its solution need not inherit a rank-one
representation, making the corrector hierarchy more demanding than in lower
dimensions. The source term satisfies
$\int_\Omega F\,\mathrm{d}x=0$, ensuring compatibility of the periodic problem.
As in the two-dimensional case, the pointwise normalisation
$u_\varepsilon(0,0,0)=0$ selects a unique continuous representative.

The corrector hierarchy for the three-dimensional problem follows the same abstract
structure as in the two-dimensional case:
\[
U_1 = u_0 + \varepsilon(\hat u_1 + \tilde u_1),\qquad
U_2 = U_1 + \varepsilon^2(\hat u_2 + \delta u_2 + \tilde u_2).
\]
For brevity in the figure labels, write
$u_1=\hat u_1+\tilde u_1$ and
$u_2=\hat u_2+\delta u_2+\tilde u_2$. The oscillatory part of the first-order
coefficient is
$\hat u_1 = \chi_1\,\partial_{x_1}u_0 + \chi_2\,\partial_{x_2}u_0 + \chi_3\,\partial_{x_3}u_0$,
where $\chi_i\;(i=1,2,3)$ are the three components of the first-order cell solution.
The dependence of the cell problems on all three slow and all three fast
coordinates makes the oscillatory correctors substantially harder to approximate
in three dimensions, despite the low tensor rank of the input coefficient.

Because directly solving the fine-scale FEM problem in three dimensions at
sufficiently small $\varepsilon$ would require an enormous number of degrees of freedom
and extensive inter-process communication, we do not compute the error of the TNN
approximation against a fine-scale reference solution in this example.  Instead, we
demonstrate the qualitative behaviour of the correctors through spatial
visualisation.



Figure~\ref{fig:3d-cross-sections} presents cross-sectional visualisations at the
plane $x_1=0.3$ for three representative scales $\varepsilon\in\{1/5,1/10,1/15\}$.
Each row corresponds to one scale; the three columns display, respectively,
the total second-order approximation $U_2=u_0+\varepsilon u_1+\varepsilon^2 u_2$,
the first-order coefficient $u_1$, and the second-order coefficient $u_2$.
All fields are color-mapped on the $(x_2,x_3)$ cross-section.

\begin{figure}[htbp]
  \centering
  \includegraphics[width=0.98\linewidth]{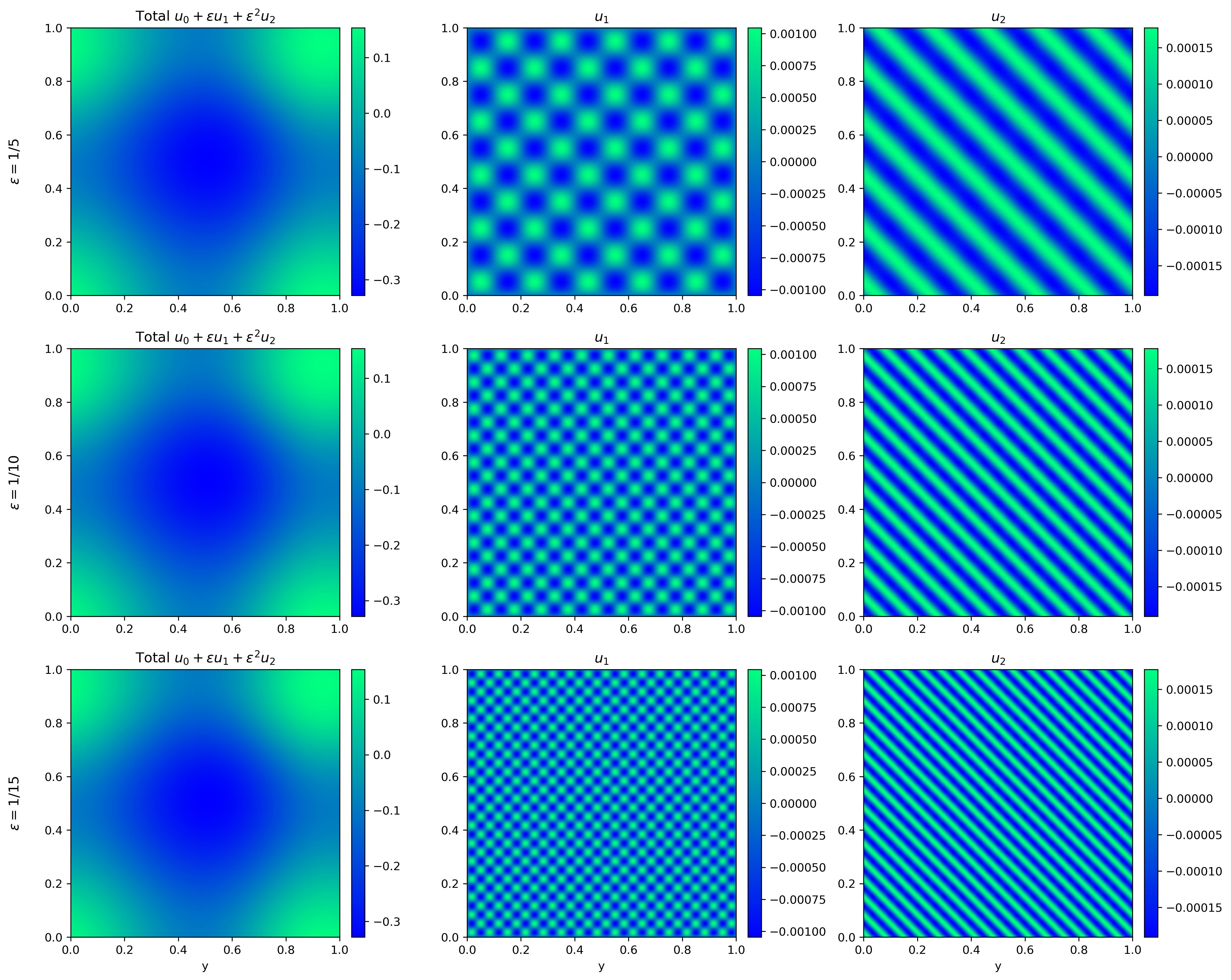}
  \caption{Three-dimensional problem: cross-sections at the plane $x_1=0.3$
for $\varepsilon=1/5$ (top row), $\varepsilon=1/10$ (middle row), and
$\varepsilon=1/15$ (bottom row). From left to right: total approximation
$U_2=u_0+\varepsilon u_1+\varepsilon^2 u_2$, first-order coefficient $u_1$,
and second-order coefficient $u_2$. The color scale is shared across
all panels within each column.}
  \label{fig:3d-cross-sections}
\end{figure}

Figure~\ref{fig:3d-cross-sections} shows the expected increase in physical
oscillation frequency as $\varepsilon$ decreases. The first- and second-order
coefficients remain bounded and visually regular on the displayed
cross-sections, and no edge-localized feature is visible in these plots.
These observations are qualitative only: without a fine-scale reference
solution they do not provide an error estimate or independently verify the
absence of boundary layers.

This experiment illustrates that the same sequential TNN construction can be
evaluated in three dimensions and that the resulting coefficients can be
visualized at several scales. A quantitative three-dimensional convergence
study remains open because it would require sufficiently resolved fine-scale
reference solutions.

\section{Conclusions}\label{Section_Conclusion}

We have developed a high-order TNN framework for locally periodic elliptic problems. The theoretical part gives a recursive two-scale hierarchy and proves the boundary-layer-free $H^1$ convergence estimate. The numerical part implements the hierarchy through strong-form TNN least-squares problems, deterministic tensorized quadrature, and a common training protocol for all learned components.

The one-dimensional boundary-matching experiment and the two-dimensional
periodic experiments show systematic reduction of the $L^2$ error as
successive correctors are added. The non-product two-dimensional coefficient
provides a first test of cell problems that retain slow-variable dependence,
although the higher-order observed rates are below their nominal asymptotic
orders on the reported scales. The three-dimensional example supplies a
qualitative visualization of the learned expansion; it is not presented as a
convergence test because no fine-scale reference solution is available.

Future work will focus on improving the accuracy and training stability of the higher-order corrector equations and on incorporating boundary layer correctors for standard Dirichlet boundary value problems.

\end{document}